\definecolor{darkblue}{rgb}{0.0,0.0,0.3}
\numberwithin{equation}{section}
\newtheorem{thm}{Theorem}[section]
\newtheorem{prop}[thm]{Proposition}
\newtheorem{defin}{Definition}[section]
\newtheorem{rem}{Remark}[section]
\newtheorem{lemma}[thm]{Lemma}
\newtheorem{cor}[thm]{Corollary}
\newcommand{\tr}{\mathrm{Tr\,}}
\newcommand{\dd}{\mathrm{\;d}}
\def \e   {\text {\rm e}}
\newcommand{\wt}{\widetilde}
\begin{document}
\title{A Szeg\H o type theorem and distribution of symplectic
eigenvalues}
\author{Rajendra Bhatia}
\address{Department of Mathematics, Ashoka
University, Rajiv Gandhi Education City, P.O.Rai, Sonepat, Haryana 131 029, India.} 
\email[Rajendra Bhatia]{\href{mailto:rajendra.bhatia@ashoka.edu.in}{rajendra.bhatia@ashoka.edu.in}}
\author{Tanvi Jain}
\address{Theoretical Statistics and Mathematics Unit, Indian
Statistical Institute, Delhi Centre, 7 S J S Sansanwal Marg,
New Delhi 110 016, India.}
\email[Tanvi Jain]{\href{mailto:tanvi@isid.ac.in}{tanvi@isid.ac.in}}
\author{Ritabrata Sengupta}
\address{Department of Mathematical Science, Indian Institute of
Science, Education, \& Research (IISER), Berhampur, Transit campus -
Govt. ITI, NH 59, Berhampur 760 010, Ganjam, Odisha, India.}
\email[Ritabrata
Sengupta]{\href{mailto:rb@iiserbpr.ac.in}{rb@iiserbpr.ac.in}}

\begin{abstract}
\par We study the properties of stationary
G-chains in terms of their generating functions. In
particular, we prove an analogue of the Szeg\H o limit theorem for symplectic eigenvalues,
derive an expression for the entropy rate of stationary quantum Gaussian processes,
and study the distribution of symplectic eigenvalues of truncated block Toeplitz matrices.
We also introduce a concept of symplectic numerical range, analogous to that of numerical range,
and study some of its basic properties, mainly in the context of block Toeplitz operators.
%focus on the distributions of their
%symplectic eigenvalues, and prove a symplectic analogue of
%Szeg\H o limit theorem.

\smallskip
\noindent \textbf{Keywords.} Symplectic eigenvalue,
symplectic numerical range, Szeg\H o limit theorem, 
Gaussian state, stationary Gaussian chain, entropy rate. 

\smallskip

\noindent \textbf{Mathematics Subject Classification
(2010):} 81P45, 94A15, 94A17, 54C70.
\end{abstract}
\thanks{The work of RB is supported by a Bhatnagar Fellowship of the CSIR. 
TJ acknowledges financial support from {\sf SERB
MATRICS grant number MTR/2018/000554}. RS acknowledges
financial support from {\sf SERB MATRICS grant number
MTR/2017/000431} and {\sf DST/ICPS/QuST/Theme-2/2019/General Project number Q-90}.
} 
\maketitle

\maketitle
%%%%%%%%%%%%%%%%%%%%%%%%%%%%%%%%%%%%%%%%%%%%%%%%%%%%%%%%%%%%%%%%
%\setstretch{2}
\section{Introduction}
\par A quantum  state $\rho$  in a bosonic
Fock space $\Gamma(\mathbb{C}^k)$ is a positive semidefinite
operator with trace one. Let  $q_1, p_1;
\ldots;q_k, p_k$ be $k$ pairs of position-momentum 
observables of a quantum system with $k$ degrees of
freedom satisfying the canonical commutation relations. We
introduce the observables $(X_1, X_2, \ldots, X_{2k-1},
X_{2k}) = (q_1, p_1, \ldots, q_k, p_k)$. Then if $\rho$ has
finite second moments, we write the covariance
matrix of $\rho$ as $A=[\mathrm{Cov}_\rho (X_i, X_j) ]_{i,j=1}^{2k},$ where 
\[\mathrm{Cov}_\rho (X_i,X_j)
= \tr \frac{1}{2} (X_1X_2+X_2X_1)\rho -(\tr X_1\rho)(\tr X_2
\rho).\]
The complete Heisenberg uncertainty principle for
all the position and momentum observables assumes the form of
the following matrix inequality: 
\begin{equation}\label{eqb1}
A+\frac{\imath}{2}J_{2k} \ge0,
\end{equation}
where $J_{2k}= {\underset{k\,
\textrm{times}}{\underbrace{J_2\oplus\cdots\oplus J_2}}}$
with $J_2=\begin{bmatrix}0 & 1\\ -1 & 0\end{bmatrix}.$ 

\par Following the terminology in \cite{krprb2}, we call 
a real $2k \times 2k$ positive definite matrix $A$ satisfying inequality
\eqref{eqb1} a \emph{G-matrix}. 
A standard result in quantum theory states that a $k$-mode,
mean zero, Gaussian quantum state  is uniquely represented by
its covariance matrix, which is a G-matrix. Conversely any
$2k \times 2k$ G-matrix is the
covariance matrix of a unique (up to permutation) $k$-mode mean zero quantum
Gaussian state in the Fock space $\Gamma(\mathbb{C}^k)$;  \cite{krpg,
MR2986302}. Finite mode quantum Gaussian states and quantum
Gaussian processes have been extensively studied in quantum
optics, quantum probability, and quantum information -  both
in theory as well as in experiments.  A comprehensive 
survey of Gaussian states and their properties can be found in the two 
books of Holevo \cite{holevo1, MR2986302}. For their applications to 
quantum information theory the reader is referred to the
survey article by Weedbrook et al \cite{RevModPhys.84.621},
Holevo's book \cite{MR2986302}, and the new book of
Serafini \cite{Sarafini}.

\par In the present paper, our concern is with a stationary
quantum Gaussian process. This is a chain of
finite mode  ($k$ mode) quantum Gaussian states exhibiting
stationarity. Let $\{\rho_n\}$ be a chain of
quantum Gaussian states with covariance matrices $\{T_n\}$.
The stationarity property means that each $T_n$ is a
positive definite block Toeplitz matrix such that $T_n$ is
the leading principal sub-matrix of $T_{n+1}$. This
sequence $\{T_n\}$ gives rise to an infinite block Toeplitz
matrix $\Sigma$. We call this chain $\{\rho_n\}$ of quantum
Gaussian states a \emph{stationary} quantum Gaussian process
and the infinite matrix $\Sigma$  a \emph{G-chain}
\cite{krprb2}. Thus a G-chain $\Sigma$ is an infinite block Toeplitz matrix.
The classical
version of such objects has been well studied in
probability theory. (See for instance \cite{MR543837}.) A study
of the quantum version has been initiated in
\cite{krprb2, krprb4}. In order to study G-chains, we need to study properties of infinite block Toeplitz matrices with blocks of size $2k \times 2k$. 
%The notion of stationarity of a G-chain translates into properties of infinite block Toeplitz matrices where each block is of size $2k \times 2k$. 
Every leading $n \times n$ principal block sub-matrix gives a
covariance matrix of an $nk$-mode quantum Gaussian state. 
Toeplitz matrices play an
important part in the study of stationary processes in
classical probability theory as well. See, e.g., 
Grenander and Szeg\H o \cite{grenander-szego}.

\par  Among real positive definite matrices, G-matrices are characterised by a simple property of their symplectic eigenvalues. 
{\it Williamson's theorem} \cite{zbMATH03035616} tells us that
for every $2k\times 2k$ real positive definite matrix $A,$
there exists a symplectic matrix $M$ such that
\[MAM^T= d_1(A) I_2 \oplus \cdots \oplus d_k(A) I_2,\]
where $d_1(A)\le \cdots\le d_k(A)$ are positive numbers uniquely determined by $A$.
These are uniquely determined by $A.$
We call these numbers the {\it symplectic eigenvalues}
of $A.$ 
 We can see that a matrix $A$ is a
G-matrix if and only if all its symplectic eigenvalues
$d_j(A)\ge \frac{1}{2}.$  There has recently been 
considerable interest in the study of various properties of
symplectic eigenvalues (see for instance
\cite{bhatia-jain,  gosson, MR2391195, PhysRevA.73.012330}), due to
their close connection with quantum optics and
thermodynamics \cite{MR2986302, mandel_wolf}. 
\par Given a $k$-mode quantum Gaussian state with covariance
matrix $A$, the {\it von Neumann entropy} of the state is
given by
\begin{equation}\label{eqn1}
S(A)= \sum_{j=1}^k \frac{2d_j(A)+1}{2} H\left( \frac{2d_j(A)
-1}{2d_j(A)+1} \right),
\end{equation}
where $H$ is the Shannon entropy function given by $H(t) =
-t \log t - (1-t) \log (1-t), \, 0 \le t \le 1,$ and $H(0)
=H(1) =0.$ See \cite{MR3800794, krp9}, or \cite{Sarafini} pages 61 -- 62.  
Let $T_n$ be the covariance matrix of a
$k$-mode stationary quantum Gaussian process, truncated at
level $n.$ The \emph{entropy rate} of the process is defined as
\[\lim_{n \to \infty} \frac{S(T_n)}{n}.\]
An important problem in information theory has been the 
study of the entropy rate of any given stationary process. This
can be very complicated \cite{coverthomas, gray-book}.
The entropy rate for a certain type of stationary quantum Gaussian
process was
calculated in \cite{krprb2}.
We compute the entropy rate for a more general class, namely, the class of bounded partially symmetric stationary quantum Gaussian processes.
Let $\Sigma=\begin{bmatrix}A_{i-j}\end{bmatrix}$ be a G-chain corresponding to a stationary quantum Gaussian process.
We 
 call this process {\it bounded} if $\Sigma$ is a bounded operator on $l^2_{2k}$ (the space of square summable sequences of elements of $\mathbb{C}^{2k}$).
In this case $\Sigma$ is a Toeplitz operator generated by a matrix symbol $\wt{A}$ in $L^\infty_{2k\times 2k}.$
The process is {\it partially symmetric} if $A_{-n}=A_n$ for all $n\in\mathbb{N}.$
We show that a stationary quantum Gaussian process is partially symmetric and bounded if and only if
its corresponding G-chain is generated by an $\wt{A}$ in $L^\infty_{2k\times 2k}$ such that $\wt{A}(\theta)$ is a G-matrix for almost all $\theta.$
The computation of the entropy rate requires a study of the
distribution of symplectic eigenvalues of block Toeplitz
matrices. To achieve this  we 
prove a symplectic analogue of a fundamental
theorem for the distribution of eigenvalues of
Toeplitz matrices, well-known as the Szeg\H o limit theorem
\cite{grenander-szego, MR0059482}. The classical Szeg\H o theorem can
be stated as follows: \emph{ Suppose $\varphi: (-\pi, \pi)
\to \mathbb{R}$ is an essentially bounded function, and
$(T_n)$ is
the sequence of Hermitian Toeplitz matrices generated by
$\varphi.$
Then for every function $f,$ continuous on the interval}
$[\textrm{essinf}~ \varphi,\, \textrm{esssup} ~\varphi],$ \emph{one has
\begin{equation}\label{eqn2}
\lim_{n \to \infty} \frac{1}{n}
\sum_{j=1}^n f(\lambda_j(T_n)) = \frac{1}{2\pi}
\int_{-\pi}^\pi f(\varphi(x)) \dd x,
\end{equation}
where $\lambda_j(T_n), \, j = 1, 2, \cdots, n,$ are the
eigenvalues of $T_n.$ }  Many different versions and  proofs
of this theorem are available in the
literature \cite{bs, MR0270218,  simon-ortho-1, tilli-1, tilli-2, MR761763,
Tyrtyshnikov, MR0409511, MR0409512}.
%The versions of the Szeg\H o limit theorem for scalar as well as block Hermitian Toeplitz matrices are quite well studied.
We prove an analogue of this
theorem for symplectic eigenvalues,
and apply this to compute the entropy rate and to 
study the distribution of symplectic eigenvalues of block Toeplitz matrices.
%We also show that a stationary chain is a bounded stationary G-chain if and only if
%it is generated by an $\wt{A}$ in $L^\infty_{2k\times 2k}$
%such that $\wt{A}(\theta)$ is a G-matrix for all $\wt{A}(\theta)$ in the essential range of $\wt{A}.$
%As an application of our results
%we compute the
%entropy rate of a bounded stationary G-chain.
%We apply the symplectic analogue of the Szeg\H o limit theorem to prove results on the distribution of symplectic eigenvalues of block Toeplitz matrices.
In particular we prove that the union of the set of all
symplectic eigenvalues of truncated $n \times n$ block
Toeplitz matrices $T_n(\wt{A})$ is dense in the set of all
symplectic eigenvalues of $\wt{A}(\theta)$ where
 $\wt{A}(\theta)$   varies over  the essential range of $\wt{A}.$

\par In classical operator theory, the numerical range is 
an important and useful concept. We introduce an analogous notion
of the symplectic numerical range  and study its basic properties.
%\par A fundamental concept in operator theory with a
%long history of hundred years is the concept of numerical
%range \cite{bhatia2, horn-johnson, MR1417493}.
%The study of numerical ranges of operators is very important
%and has extensive applications not only in different areas
%of mathematics but in physics as well. See, for instance
%\cite{MR2273568, MR1616456, MR3909251, MR3945586}.
%In order to study the symplectic spectrum it would be very
%useful to have a concept of {\it symplectic numerical range} of a positive definite operator.
%In this paper we introduce this concept % concept of
%{it symplectic numerical range} of a real positive definite
%operator and give and study its basic properties. 
We show that the closure of the symplectic numerical range of an operator is convex and contains the symplectic spectrum.
We give a relationship between the symplectic numerical
ranges of truncated block Toeplitz matrices and their
symbol. This, in turn, helps us to have a better understanding
of the distribution of symplectic eigenvalues of the
truncated block Toeplitz matrices.

\par The paper is organised as follows:
We give some basic notations and results in Section
\ref{S2}, introduce the notion of symplectic numerical range in
Section \ref{S3}, and study some of its basic properties,
especially in the context of block Toeplitz operators.
In Section \ref{S4} we prove a symplectic analogue of
Szeg\H o limit theorem, and give its applications.

%%%%%%%%%%%%%%%%%%%%%%%%%%%%%%%%%%%%%%%%%%%%%%%%%%%%%%%%%
\section{Preliminaries} \label{S2}

\par We begin with some basic facts about
Toeplitz operators. For proofs and other
details, the reader may refer to the book of B\"ottcher and
Silbermann \cite{bs}.

\par Let $L^\infty_{k\times k}$ denote the set of all
functions $\wt{A}=\begin{bmatrix}\wt{a}_{ij}\end{bmatrix}$ from $[-\pi,\pi]$ to the set of all $k\times k$ complex matrices, with
$\wt{A}(-\pi)=\wt{A}(\pi)$ and $\wt{a}_{ij}$ essentially bounded for all $i,j=1,\ldots,k.$
%For $1\le p\le \infty,$ let $L^p_{k\times k}$ denote the subset of $\mathcal{F}_{k\times k}$
%that consists of all
%$\wt{A}=\begin{bmatrix}\wt{a}_{ij}\end{bmatrix}$ such that $\wt{a}_{ij}$ belong to $L^p$ for all $i,j=1,\ldots,k.$ 
For an $\wt{A}$
in $L^\infty_{k\times k},$ we define
\begin{equation}\label{eqn3}
\|\wt{A}\|={\underset{\theta\in
[-\pi,\pi]}{\textrm{esssup}}}\|\wt{A}(\theta)\|,
\end{equation}
where $\|\wt{A}(\theta)\|$ denotes the operator norm of
$\wt{A}(\theta).$ It is easy to see that $\|\cdot\|$ is a
norm on $L^\infty_{k\times k}.$
The space $L^\infty_{k\times k}$ is a $C^*$-algebra with the usual operations.
%on $L^\infty_{k\times k}$ given by
%\begin{align*}
%(\wt{A}+\wt{B})(\theta) & =\wt{A}(\theta)+\wt{B}(\theta),\\
%(\alpha \wt{A})(\theta) &=\alpha(\wt{A}(\theta)),\\
%(\wt{A}\wt{B})(\theta) &= \wt{A}(\theta)\wt{B}(\theta),\\
%\wt{A}^*(\theta) & =(\wt{A}(\theta))^*,
%\end{align*} 
%$\wt{A},\wt{B}\in L^\infty_{k\times k}$ and $\alpha\in\mathbb{C}.$ 
Let $L^2_k$ be the set of all functions
$\wt{x}=(\wt{x}_1,\ldots,\wt{x}_k)$ from $[-\pi,\pi]$ to $\mathbb{C}^k$ with $\wt{x}(-\pi)=\wt{x}(\pi)$ and $\wt{x}\in L^2$ for all $i=1,\ldots,k.$
%and let $L^2_k$ be its subset consisting of all functions
%$\wt{x}=(\wt{x}_1,\ldots,\wt{x}_k)$ such that each $\wt{x}_i\in L^2.$
The space $L^2_k$ is a Hilbert space with the inner product
\begin{equation}\label{eqn4}
\langle \wt{x},\wt{y}\rangle = \frac{1}{2\pi}
\int\limits_{-\pi}^{\pi}\langle
\wt{x}(\theta),\wt{y}(\theta)\rangle\dd \theta = \frac{1}{2\pi}
\int\limits_{-\pi}^{\pi} \sum\limits_{i=1}^{k}
\overline{\wt{x}_i(\theta)}\wt{y}_i(\theta)\dd\theta.
\end{equation}
With each $\wt{A}$ in $L^\infty_{k\times k},$ we can
associate the multiplication map $M_{\wt{A}}$ on $L^2_k$
defined as
\begin{equation}\label{eqn5}
M_{\wt{A}}(\wt{x})(\theta)=\wt{A}(\theta)\wt{x}(\theta),
\end{equation}
where $\tilde{x}$ is here understood as a column vector.
It can be verified that $M_{\wt{A}}$ is a bounded 
linear operator on $L^2_k,$
% if and only if $\wt{A}\in L^\infty_{k\times k},$
and $\|M_{\wt{A}}\|\le \|\wt{A}\|.$
The space $\{M_{\wt{A}}:\wt{A}\in L^{\infty}_{k\times k}\}$ is a $C^*$-algebra, and the map $\wt{A}\mapsto M_{\wt{A}}$ is a surjective isomorphism.
This implies that
\[\|M_{\wt{A}}\|=\|\wt{A}\|\]
for all $\wt{A}$ in $L^\infty_{k\times k}.$ 

\par Next let $l^2_k$ be the set of all sequences of vectors
$\hat{x}=(x_0,x_1,x_2,\ldots),$ $x_i\in\mathbb{C}^k$ such that
$\sum\limits_{i=0}^{\infty} \|x_i\|^2 < \infty.$ 
Here $\|x_i\|$ is the Euclidean norm of
$x_i=(x_i^{(1)},\ldots,x_i^{(k)})$.  
%$\|x_i\|=\sqrt{\sum\limits_{j=1}^{k}|x_i^{(j)}|^2}.$ 
The
space $l^2_k$ is a Hilbert space with the inner product
given by
\begin{equation}\label{eq1}
\langle \hat{x},\hat{y}\rangle = \sum\limits_{i=0}^{\infty} \langle x_i,y_i\rangle.
\end{equation}
Clearly this inner product induces the $l^2$ norm on $l^2_k.$
We denote this norm by $\|\cdot\|_2.$ 
In a similar way, $l^2_k(\mathbb{Z})$ is the Hilbert space of all square summable doubly infinite sequences $\hat{x}$ of vectors with the $l^2$ norm.

\par Throughout this paper, we denote the elements of $L^\infty_{k\times k}$($L^2_k$) by
$\wt{A},\wt{B},\ldots$ ($\wt{u},\wt{x},\ldots$), the elements of $l^2_k$ by $\hat{u},\hat{x},\ldots,$
and the usual matrices (vectors) by $A,B,\ldots$ ($u,x,\ldots$),
unless we mention otherwise.
\par Let $\wt{A}\in L^\infty_{k\times k}.$
For each $n\in\mathbb{Z},$ let $A_n$ be the $n\textrm{th}$ Fourier coefficient of $\wt{A}$  given by
$$A_n=\frac{1}{2\pi}\int\limits_{-\pi}^{\pi}\wt{A}(\theta)\e^{-\imath n\theta}\dd\theta.$$
%We say that a  sequence $(A_n)_{n\in\mathbb{Z}}$ of $k\times k$ %matrices $A_n=\begin{bmatrix}a^{(n)}_{ij}\end{bmatrix}$
%belongs to the space $l^2_{k\times k}(\mathbb{Z})$ if the sequence %$(a^{(n)}_{ij})_{n\in\mathbb{Z}}$ belongs to $l^2(\mathbb{Z})$ for all $i,j=1,\ldots,k.$
%We can see that $\wt{A}\in L^2_{k\times k}$ if and only if the
%sequence $( A_n)$ is
%in $l^2_{k\times k}(\mathbb{Z}).$
 Suppose that $L(\wt{A})$ is the doubly infinite
$k\times k$ block Toeplitz matrix
$\begin{bmatrix}A_{i-j}\end{bmatrix}_{i,j=-\infty}^{\infty}.$ 
Since $l^2_k(\mathbb{Z})$ and $L^2_k$ are isomorphic Hilbert spaces, we
can identify $L(\wt{A})$ with the linear operator $M_{\wt{A}}$ defined
in \eqref{eqn5}.
Let  $T(\wt{A})$ be the infinite block Toeplitz matrix
$\begin{bmatrix}A_{i-j}\end{bmatrix}_{i,j=0}^{\infty}.$ This
is a principal submatrix of $L(\wt{A}).$ If for
$n\in\mathbb{N},$  $\overline{P}_n$ is the projection
operator on $l^2_k(\mathbb{Z})$ defined as
\[\overline{P}_n(\ldots,x_{-n},x_{-(n-1)},\ldots,x_0,\ldots,x_n,\ldots)
= (\ldots,0,0,x_{-(n-1)},\ldots,x_0,\ldots,x_n,\ldots),\]
then $\overline{P}_nL(\wt{A})\overline{P}_n$ converges strongly to
$L(\wt{A}),$ and for every $n,$
$\overline{P}_nL(\wt{A})\overline{P}_n=T(\wt{A}).$
 %may be naturally identified with $T(\wt{A}).$
For $\wt{A}$ in $L^\infty_{k\times k},$
we say $T(\wt{A})$ is the infinite block Toeplitz matrix generated by $\wt{A}$
and  $\wt{A}$ is the \emph{symbol} of the block Toeplitz operator $T(\wt{A}).$

\begin{prop}\label{prop1}
Let $\Sigma=\begin{bmatrix}A_{i-j}\end{bmatrix}_{i,j=0}^{\infty}$ be an infinite block Toeplitz matrix.
Then $\Sigma$ is a bounded linear operator on $l^2_k$ if and only if $\Sigma=T(\wt{A})$ for some $\wt{A}$ in $L^\infty_{k\times k}.$
In this case $\|\Sigma\|=\|\wt{A}\|.$
For every $\hat{x}\in l^2_k,$
\begin{equation}\label{eqn6}
\langle \hat{x},T(\wt{A})\hat{x}\rangle = \frac{1}{2\pi}
\int\limits_{-\pi}^{\pi} \langle \wt{x}(\theta),
\wt{A}(\theta)\wt{x}(\theta)\rangle \dd\theta,
\end{equation}
where $\wt{x}$ is the element of $L^2_k$ defined as
$\wt{x}(\theta)=\sum\limits_{n=0}^{\infty}x_n\e^{\imath
n\theta}.$
\end{prop}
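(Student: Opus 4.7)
The plan is to establish three things: (i) if $\Sigma=T(\wt{A})$ with $\wt{A}\in L^\infty_{k\times k}$ then $\Sigma$ is bounded and $\|\Sigma\|=\|\wt{A}\|$; (ii) the inner product formula \eqref{eqn6}; (iii) conversely, any bounded Toeplitz $\Sigma$ arises from some symbol in $L^\infty_{k\times k}$. The unifying tool is the unitary identification $l^2_k(\mathbb{Z})\cong L^2_k$ via Fourier series, already used in the preliminaries, under which the bilateral Laurent operator $L(\wt{A})$ corresponds to $M_{\wt{A}}$, an operator of norm $\|\wt{A}\|$.

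For (i), the identity $\overline{P}_n L(\wt{A})\overline{P}_n=T(\wt{A})$ (modulo the obvious index shift) recorded in the excerpt exhibits $T(\wt{A})$ as a compression of $L(\wt{A})$, yielding $\|T(\wt{A})\|\le\|L(\wt{A})\|=\|\wt{A}\|$ and in particular boundedness. Conversely, $\overline{P}_n L(\wt{A})\overline{P}_n\to L(\wt{A})$ strongly, and lower semicontinuity of the operator norm under strong-operator limits gives $\|L(\wt{A})\|\le\|T(\wt{A})\|$, so the two norms agree. For (ii), extend $\hat{x}\in l^2_k$ to $\hat{x}_e\in l^2_k(\mathbb{Z})$ by zero-padding at negative indices; then $\langle\hat{x},T(\wt{A})\hat{x}\rangle=\langle\hat{x}_e,L(\wt{A})\hat{x}_e\rangle$, and transferred across the Fourier isomorphism this becomes $\langle\wt{x},M_{\wt{A}}\wt{x}\rangle=\frac{1}{2\pi}\int_{-\pi}^{\pi}\langle\wt{x}(\theta),\wt{A}(\theta)\wt{x}(\theta)\rangle\dd\theta$, which is \eqref{eqn6}.

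For (iii), form the bilateral Laurent matrix $L=[A_{i-j}]_{i,j\in\mathbb{Z}}$. I would first verify that $L$ defines a bounded operator on $l^2_k(\mathbb{Z})$ of norm $\|\Sigma\|$: for finitely supported $\hat{y}$ with support in $[-N,N]$, the Toeplitz structure together with a block shift by $N$ reduces the computation of $\|L\hat{y}\|_2$ to $\|\Sigma(S^N\hat{y})\|_2\le\|\Sigma\|\,\|\hat{y}\|_2$; density of finitely supported sequences then extends $L$ boundedly, and the reverse inequality is trivial by compression to the nonnegative block. Transferring $L$ to $L^2_k$ via Fourier, the Toeplitz property translates to commutation with multiplication by $\e^{\imath\theta}$, hence with every scalar trigonometric polynomial, and by a density argument with every scalar $L^\infty$ function. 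The standard fact that the commutant of the maximal abelian algebra of scalar $L^\infty$ multipliers on $L^2_k\cong L^2\otimes\mathbb{C}^k$ is exactly $\{M_{\wt{A}}:\wt{A}\in L^\infty_{k\times k}\}$ then produces the symbol $\wt{A}$. A direct computation of its Fourier coefficients recovers the blocks $A_n$, whence $\Sigma=T(\wt{A})$ and $\|\wt{A}\|=\|L\|=\|\Sigma\|$.

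The main obstacle is the commutant step in (iii): moving from a bounded, shift-commuting operator on $L^2_k$ to one that is matrix-valued multiplication. For $k=1$ this is the classical maximality of $L^\infty$ as an abelian subalgebra of $\mathcal{B}(L^2)$; for general $k$ one needs the tensor decomposition $L^2_k\cong L^2\otimes\mathbb{C}^k$ and the commutant identification $(L^\infty\otimes I)'=L^\infty\otimes M_k(\mathbb{C})\cong L^\infty_{k\times k}$. The remaining pieces (compressions, strong limits, Fourier bookkeeping) are routine.
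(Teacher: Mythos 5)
The paper does not give its own proof of Proposition~\ref{prop1}: it is stated as background, with the reader referred to B\"ottcher--Silbermann \cite{bs}. So there is no in-paper argument to compare against. Your plan follows the standard textbook route (boundedness of the bilateral Laurent operator, Fourier transfer to $L^2_k$, identification of the commutant of scalar multipliers, and recovery of the symbol from Fourier coefficients), and the overall structure is sound.

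One step needs repair. In part (iii) you write that, for $\hat{y}$ supported in $[-N,N]$, ``the Toeplitz structure together with a block shift by $N$ reduces the computation of $\|L\hat{y}\|_2$ to $\|\Sigma(S^N\hat{y})\|_2 \le \|\Sigma\|\,\|\hat{y}\|_2$.'' That equality is false: $S^N\hat{y}$ is supported in $[0,2N]$, and by Toeplitz shift-invariance $(\Sigma\, S^N\hat{y})_i = (L\hat{y})_{i-N}$ for $i\ge 0$, so
\[
\|\Sigma(S^N\hat{y})\|_2^2 = \sum_{j\ge -N}\|(L\hat{y})_j\|^2 \le \|L\hat{y}\|_2^2,
\]
with strict inequality in general because $(L\hat{y})_j$ need not vanish for $j<-N$. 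The correct argument takes $M\ge N$, obtains $\|\Sigma(S^M\hat{y})\|_2^2 = \sum_{j\ge -M}\|(L\hat{y})_j\|^2$, and lets $M\to\infty$: by monotone convergence $\|L\hat{y}\|_2 = \sup_{M}\|\Sigma(S^M\hat{y})\|_2 \le \|\Sigma\|\,\|\hat{y}\|_2$. With that fix, your density extension and the reverse compression inequality go through, and the rest of (iii) --- commutation with $M_{e^{\imath\theta}}$, weak-$*$ density of trigonometric polynomials in $L^\infty$, and the commutant identification $(L^\infty\otimes I)' = L^\infty\otimes M_k(\mathbb{C})$ on $L^2\otimes\mathbb{C}^k$ --- is correct and is exactly the right tool. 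Parts (i) and (ii) are fine as written.
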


\par A Hermitian operator $T$ on a Hilbert space $\mathcal{H}$ is said to be \emph{positive semidefinite} if $\langle x,Tx \rangle \ge 0$ for all $x$ in $\mathcal{H}$. If equality here holds only for the null vector, then $T$ is said to be \emph{positive definite}. If the space $\mathcal{H}$ is finite-dimensional, a positive semidefinite operator is positive definite if and only if it is invertible. This is not the case when $\mathcal{H}$ is infinite-dimensional (consider, e.g., the operator $T= \mathrm{diag}(1, 1/2, 1/3,.....)$ on the space $l_2$). So, we will use the term \emph{positive invertible} for an operator that is positive definite and invertible.
%\par {\color{red}{A bounded linear operator $\Phi$ on $l^2_k$ is {\it positive semidefinite}
%if for each $\hat{x}\in l^2_k,$  $\langle \hat{x},\Phi \hat{x}\rangle \ge 0.$
%We call $\Phi$ {\it positive invertible} if it is positive semidefinite and invertible.}}
%{\color{blue}{Comment: I am defining positive semidefinite, ... for operators on $l^2_k$ as I don't want to introduce a notation for general Hilbert spaces. But because of this I am not able to talk of matrices. Suggest...}}
%where the inner product on $l^2_k$ is given by \eqref{eq1}.
Let $T_n(\wt{A})$ be the truncated $n\times n$ block
Toeplitz matrix $\begin{bmatrix} A_{i-j}
\end{bmatrix}_{i,j=0}^{n-1}.$
The operator $T(\wt{A})$ is positive semidefinite if and only if all $T_n(\wt{A})$ are positive semidefinite.
%Then $T(\wt{A})$ is positive semidefinite if and only if $T_n(\wt{A})$ is positive semidefinite for all $n\in\mathbb{N}.$

\par The \emph{essential range} of $\wt{A}$ is given by the set
of all $k \times k$ matrices $B$ such
that for every $\epsilon >0,\, m(\{t: \|\wt{A}(t) -B\| <
\epsilon\})>0.$
Here $m(\cdot)$ denotes the Lebesgue measure. 
We denote the essential range of $\wt{A}$ by
$\mathcal{R}(\wt{A}).$ Clearly the essential range of $\wt{A}$ is closed in the space of $k\times k$ matrices and
is contained in the closure of the range of $\wt{A}.$
So, if $\wt{A}\in L^\infty_{k\times k},$ then
$\mathcal{R}(\wt{A})$ is compact. Also if $X\subseteq
[-\pi,\pi]$ is any set such that $\wt{A}(X)\cap
\mathcal{R}(\wt{A})=\emptyset,$ then $m(X)=0.$

\begin{prop}\label{prop3}
Let $\wt{A}\in L^\infty_{k\times k}.$
Then $T(\wt{A})$ is a positive semidefinite operator on $l^2_k$
if and only if all matrices $\wt{A}(\theta)$ in $\mathcal{R}(\wt{A})$ are positive semidefinite.
Consequently the matrices $T_n(\wt{A})$ are positive semidefinite for all $n$ if and only if all matrices $\wt{A}(\theta)$ in $\mathcal{R}(\wt{A})$ are positive semidefinite.

If $T(\wt{A})$ is positive invertible,
then all matrices $\wt{A}(\theta)$ in $\mathcal{R}(\wt{A})$ are positive definite.
\end{prop}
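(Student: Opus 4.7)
The proof goes through the intermediate characterisation that $\wt{A}(\theta)\ge 0$ for almost every $\theta$. This a.e.\ condition is equivalent to PSD-ness of every $B\in\mathcal{R}(\wt{A})$: on one hand, any $B$ in the essential range is a limit of matrices $\wt{A}(\theta_n)$ with $\wt{A}(\theta_n)\ge 0$, and PSD-ness is preserved under limits; on the other, the excerpt records that $\{\theta:\wt{A}(\theta)\notin\mathcal{R}(\wt{A})\}$ has measure zero, so PSD-ness on $\mathcal{R}(\wt{A})$ forces PSD-ness a.e. The implication $\wt{A}(\theta)\ge 0$ a.e.\ $\Rightarrow T(\wt{A})\ge 0$ is then immediate by plugging the pointwise inequality into the integral formula \eqref{eqn6} of Proposition \ref{prop1}.

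The substantive step is the reverse implication $T(\wt{A})\ge 0\Rightarrow\wt{A}(\theta)\ge 0$ a.e. I would fix $v\in\mathbb{C}^k$ and test $T(\wt{A})$ against finitely supported vectors of the special form $x_j=\alpha_j v$; then $\wt{x}(\theta)=p(\theta)\,v$ for an analytic trigonometric polynomial $p$, and \eqref{eqn6} collapses to
\[\langle\hat{x},T(\wt{A})\hat{x}\rangle=\frac{1}{2\pi}\int_{-\pi}^{\pi}|p(\theta)|^{2}\,\varphi_v(\theta)\,\dd\theta,\qquad \varphi_v(\theta):=\langle v,\wt{A}(\theta)v\rangle.\]
Thus $T(\wt{A})\ge 0$ forces $\int|p|^{2}\varphi_v\ge 0$ for every analytic polynomial $p$. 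To localise this, I would use that the Fej\'er kernel $K_n$ at a point $\theta_0$ is itself $\tfrac{1}{n}|q_n|^{2}$ for the \emph{analytic} polynomial $q_n(\theta)=\sum_{j=0}^{n-1}e^{-\imath j\theta_0}e^{\imath j\theta}$. Lebesgue's differentiation theorem applied to $\varphi_v\in L^\infty\subset L^1$ then gives
\[\frac{1}{2\pi n}\int_{-\pi}^{\pi}|q_n(\theta)|^{2}\,\varphi_v(\theta)\,\dd\theta\longrightarrow\varphi_v(\theta_0)\quad\text{for a.e.\ }\theta_0,\]
so $\varphi_v\ge 0$ a.e. Letting $v$ range over a countable dense subset of $\mathbb{C}^k$ and using continuity of $v\mapsto\langle v,\wt{A}(\theta)v\rangle$ then produces $\wt{A}(\theta)\ge 0$ for a.e.\ $\theta$.

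For the "consequently" clause, each $T_n(\wt{A})$ is a principal submatrix of $T(\wt{A})$, while the compressions $\overline{P}_n T(\wt{A})\overline{P}_n$ converge strongly to $T(\wt{A})$ as noted in the excerpt; this gives $T(\wt{A})\ge 0$ iff every $T_n(\wt{A})\ge 0$. The final assertion reduces to the first part by a shift trick: if $T(\wt{A})\ge\delta I$ for some $\delta>0$, then $T(\wt{A}-\delta I_k)\ge 0$, whence $B-\delta I_k\ge 0$, and so $B$ is positive definite, for every $B\in\mathcal{R}(\wt{A})$. The main obstacle throughout is the passage from the global integral inequality to a pointwise a.e.\ statement about the integrand; this is resolved by the observation that the Fej\'er kernel is the squared modulus of an explicit analytic polynomial, which keeps the test vectors inside the class available through \eqref{eqn6} and allows Lebesgue differentiation to do the localisation.
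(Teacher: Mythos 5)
Your proof is correct. Note, though, that the paper itself gives no proof of this proposition: it is listed among the preliminaries with a pointer to B\"ottcher and Silbermann \cite{bs}, so the comparison is with the standard literature argument rather than an in-paper one. The usual route for the key implication observes that for an arbitrary trigonometric polynomial $\wt{x}\in L^2_k$ (not necessarily analytic) and $N$ large, $e^{\imath N\theta}\wt{x}(\theta)$ has only nonnegative frequencies while $\langle e^{\imath N\theta}\wt{x},\wt{A}\,e^{\imath N\theta}\wt{x}\rangle=\langle \wt{x},\wt{A}\wt{x}\rangle$; hence positivity of the compression $T(\wt{A})$ forces positivity of the full multiplication operator $M_{\wt{A}}$ on a dense set, and $M_{\wt{A}}\ge 0$ is classically equivalent to $\wt{A}(\theta)\ge 0$ a.e. Your Fej\'er-kernel localisation replaces that modulation trick by a direct pointwise argument; it is self-contained and equally valid, at the cost of invoking a.e.\ convergence of the Ces\`aro means. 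On that point, what you actually use is the Fej\'er--Lebesgue theorem (convergence at Lebesgue points requires the standard kernel estimate, not the Lebesgue differentiation theorem verbatim); if you prefer to avoid it, note that since $\varphi_v\in L^\infty$ the Fej\'er means converge to $\varphi_v$ in $L^1$, so a subsequence converges a.e., which suffices because every mean is nonnegative by your test-vector computation. The remaining pieces --- the identification of ``positive semidefinite on $\mathcal{R}(\wt{A})$'' with ``positive semidefinite a.e.'' via the two recorded properties of the essential range, the truncation equivalence via finitely supported vectors, and the shift $T(\wt{A})\ge\delta I$ with $\delta=\|T(\wt{A})^{-1}\|^{-1}$ giving $B\ge\delta I_k$ for every $B\in\mathcal{R}(\wt{A})$ --- are all handled correctly.
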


%Using the above proposition,
%we obtain the following:
%
%\begin{cor}\label{cor4}
%For every $\wt{A}\in L^\infty_{k\times k},$  $T_n(\wt{A})$ is positive
%semidefinite if and only if all matrices in $\mathcal{R}(\wt{A})$ %are positive semidefinite.
%If $\wt{A}$ is positive definite, then $T_n(\wt{A})$ is positive definite for every $n\in\mathbb{N}.$
%\end{cor}

\par The following proposition gives an equivalent condition for $T_n(\wt{A})$ to be positive definite for each $n.$
See \cite{Miranda-Tilli}.

\begin{prop}\label{propf5}
For every $\wt{A}\in L^\infty_{k\times k},$ $T_n(\wt{A})$ is positive definite for every $n$ if and only if
all matrices $\wt{A}(\theta)$ in $\mathcal{R}(\wt{A})$ are positive semidefinite, and
$\wt{A}(\theta)$ are positive definite for all $\theta$ in some subset of $[-\pi,\pi]$ that has positive measure
\end{prop}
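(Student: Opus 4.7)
My plan is to establish the biconditional by treating the two directions separately, relying in each case on a truncated version of \eqref{eqn6}. Namely, for a coefficient vector $x=(x_0,\ldots,x_{n-1})\in\mathbb{C}^{nk}$ and its associated vector trigonometric polynomial $\wt{x}(\theta)=\sum_{j=0}^{n-1}x_j e^{\imath j\theta}$, expanding the quadratic form in blocks and substituting the Fourier coefficient definition of the $A_{i-j}$ gives
\[\langle x, T_n(\wt{A})x\rangle \;=\; \frac{1}{2\pi}\int_{-\pi}^{\pi}\langle \wt{x}(\theta),\wt{A}(\theta)\wt{x}(\theta)\rangle\,\dd\theta.\]

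For ($\Leftarrow$), under the two hypotheses Proposition \ref{prop3} first yields $\wt{A}(\theta)\ge 0$ almost everywhere, so the integrand above is nonnegative and $\langle x,T_n(\wt{A})x\rangle\ge 0$. Equality would force $\wt{A}(\theta)\wt{x}(\theta)=0$ almost everywhere; on the positive measure set $E$ where $\wt{A}(\theta)$ is positive definite this entails $\wt{x}(\theta)=0$ on $E$. Each component of $\wt{x}$ is a polynomial in $e^{\imath\theta}$, so if its zero set has positive measure it must vanish identically. Hence $x=0$, establishing positive definiteness of $T_n(\wt{A})$ for every $n$.

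For ($\Rightarrow$), positive definiteness implies positive semidefiniteness, so Proposition \ref{prop3} produces the first condition. For the second, I would argue by contrapositive: assuming $\wt{A}(\theta)$ is singular for almost every $\theta$, the goal is to produce a nonzero vector trigonometric polynomial $\wt{x}$ with $\wt{A}\wt{x}=0$ a.e., which by the identity above yields a nonzero kernel vector for $T_n(\wt{A})$ and contradicts positive definiteness. A natural starting point is a measurable selection $\wt{v}\in L^\infty_k$ of nonzero elements of $\ker\wt{A}(\theta)$, obtainable via nonzero columns of $\mathrm{adj}(\wt{A}(\theta))$ combined with rank-stratification of $\wt{A}$.

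The main obstacle, and where I expect the bulk of the work to lie, is the passage from such an $L^\infty$ kernel section $\wt{v}$ to an exact trigonometric polynomial in the kernel. Naive Fourier truncation of $\wt{v}$ only gives $\wt{A}\wt{v}_N\to 0$ in $L^2$, which translates to $\lambda_{\min}(T_N(\wt{A}))\to 0$ but not to vanishing of $\lambda_{\min}$ for any particular $N$. Bridging this gap requires exploiting the algebraic structure of $\ker\wt{A}(\theta)$ on regions where the rank of $\wt{A}$ is constant, or invoking a matrix Fej\'er--Riesz / Douglas-type factorisation of $\wt{A}$ that reduces the question to the scalar case. This is the technical heart of the proof and is where the Miranda--Tilli machinery cited by the authors enters.
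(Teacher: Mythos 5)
The paper gives no argument for this proposition; it simply cites \cite{Miranda-Tilli}, so there is no in-paper proof to compare against. Evaluating your attempt on its own terms: the truncated Parseval identity $\langle x,T_n(\wt{A})x\rangle=\frac{1}{2\pi}\int_{-\pi}^{\pi}\langle\wt{x}(\theta),\wt{A}(\theta)\wt{x}(\theta)\rangle\dd\theta$ is exactly the right starting point, and your ($\Leftarrow$) direction is correct and complete. Granting $\wt{A}(\theta)\ge 0$ a.e., equality $\langle x,T_n(\wt{A})x\rangle=0$ does force $\wt{A}(\theta)^{1/2}\wt{x}(\theta)=0$ a.e., hence $\wt{x}(\theta)=0$ on the positive-measure set where $\wt{A}(\theta)>0$, and since each component of $\wt{x}$ is a polynomial in $e^{\imath\theta}$ this forces $\wt{x}\equiv 0$. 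This is clean and needs no change.

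The ($\Rightarrow$) direction is where the proposal breaks down, and more seriously than you acknowledge. You correctly translate the problem into producing a nonzero vector trigonometric polynomial $\wt{x}$ with $\wt{A}(\theta)\wt{x}(\theta)=0$ a.e.\ under the assumption that $\wt{A}(\theta)$ is singular a.e. But no amount of ``Fej\'er--Riesz / Douglas / rank-stratification'' machinery can bridge this gap, because such a polynomial need not exist. Take $k=2$ and
\[
\wt{A}(\theta)=\begin{pmatrix}1 & g(\theta)\\ g(\theta) & g(\theta)^2\end{pmatrix},\qquad g=\chi_{[0,\pi)}.
\]
Every matrix in $\mathcal{R}(\wt{A})$ is positive semidefinite of rank one, so $\wt{A}(\theta)$ is never positive definite. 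Yet $\wt{A}(\theta)\wt{x}(\theta)=0$ a.e.\ forces $\wt{x}_1+g\wt{x}_2=0$ a.e.; on $(-\pi,0)$ this gives $\wt{x}_1=0$ a.e., hence $\wt{x}_1\equiv 0$, and then on $(0,\pi)$ it gives $\wt{x}_2\equiv 0$. So no nonzero trigonometric-polynomial kernel section exists, every $T_n(\wt{A})$ is positive definite, and the ``only if'' clause fails for this symbol. (One can verify $T_2(\wt{A})$ directly: $\det T_2=\tfrac14(1/2-2/\pi^2)^2>0$.) The lesson is that the pointwise kernel of $\wt{A}(\theta)$ is generically not spanned a.e.\ by rational functions of $e^{\imath\theta}$, so no truncation or selection argument can manufacture the desired $\wt{x}$. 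Your observation that Fourier truncation only gives $\lambda_{\min}(T_N(\wt{A}))\to 0$ is exactly the symptom of this: the infimum of $\lambda_{\min}$ can be zero without being attained at any finite $N$. You should therefore not expect to complete the ($\Rightarrow$) direction by this route; the correct intrinsic characterisation of ``$T_n(\wt{A})$ positive definite for all $n$'' (given $\wt{A}\ge 0$ a.e.) is precisely the nonexistence of a nonzero vector trigonometric polynomial in the a.e.\ kernel of $\wt{A}$, of which the positive-measure positive-definiteness hypothesis is a sufficient but not necessary condition.
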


\par We call a Toeplitz operator $\Sigma=\begin{bmatrix}A_{i-j}\end{bmatrix}$  {\it partially symmetric} if each
$A_n$ is a real matrix and $A_{-n}=A_n$ for all $n\in\mathbb{N}.$
An element $\wt{A}$ of $L^\infty_{k\times k}$ is \emph{even} if $\wt{A}(-\theta)=\wt{A}(\theta)$ for almost all $\theta\in [-\pi,\pi].$

\begin{prop}\label{propps}
For any $\wt{A}$ in $L^\infty_{k\times k},$ the following statements are equivalent.
\begin{itemize}
\item[(i)] $T(\wt{A})$ is partially symmetric.
\item[(ii)] $\wt{A}$ is even and every matrix $\wt{A}(\theta)$ in $\mathcal{R}(\wt{A})$ is real.
\item[(iii)] The infinite matrix $T(\wt{A})$ is real and every matrix $\wt{A}(\theta)$ in $\mathcal{R}(\wt{A})$ is real.
\end{itemize}
\end{prop}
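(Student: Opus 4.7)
The plan is to turn everything into a question about Fourier coefficients via the formula $A_n = \frac{1}{2\pi}\int_{-\pi}^{\pi}\wt{A}(\theta)\e^{-\imath n\theta}\dd\theta$ and the uniqueness of Fourier series in $L^\infty_{k\times k} \subset L^2_{k\times k}$. Two identities drive the whole argument: a change of variables $\phi = -\theta$ gives that $\wt{A}(-\theta)$ has $n$-th Fourier coefficient $A_{-n}$; and combining this with complex conjugation, $\overline{\wt{A}(-\theta)}$ has $n$-th Fourier coefficient $\overline{A_n}$. So $\wt{A}$ is even a.e.\ iff $A_{-n}=A_n$ for all $n\in\mathbb{Z}$; and $\wt{A}(\theta)=\overline{\wt{A}(-\theta)}$ a.e.\ iff all $A_n$ are real.

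Before the main loop I would record a short lemma: every matrix in $\mathcal{R}(\wt{A})$ is real iff $\wt{A}(\theta)$ is real for almost every $\theta$. The $(\Leftarrow)$ direction is immediate from the excerpt's remark that $\{\theta:\wt{A}(\theta)\notin\mathcal{R}(\wt{A})\}$ has measure zero, since then a.e.\ $\wt{A}(\theta)\in\mathcal{R}(\wt{A})$. For $(\Rightarrow)$, if some $B\in\mathcal{R}(\wt{A})$ is not real, choose $\epsilon<\|\im B\|$; then $\{\theta:\|\wt{A}(\theta)-B\|<\epsilon\}$ meets only non-real values of $\wt{A}$, so by hypothesis it has measure zero, contradicting $B\in\mathcal{R}(\wt{A})$.

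Now I would close the three-way equivalence. For (i) $\Rightarrow$ (ii): $A_{-n}=A_n$ together with the two Fourier identities forces $\wt{A}(\theta)=\wt{A}(-\theta)=\overline{\wt{A}(-\theta)}$ a.e., so $\wt{A}$ is even and real-valued a.e.; the lemma then promotes this to the essential-range statement. For (ii) $\Rightarrow$ (iii): the lemma gives $\wt{A}(\theta)$ real a.e., hence $\overline{A_n}=A_{-n}$; evenness gives $A_{-n}=A_n$; together these yield $A_n\in\mathbb{R}$ and $A_{-n}=A_n$, so every entry of $T(\wt{A})$ is real. For (iii) $\Rightarrow$ (i): realness of the matrix $T(\wt{A})$ means every $A_{i-j}$, i.e.\ every $A_n$ for $n\in\mathbb{Z}$, is real; the essential-range hypothesis plus the lemma gives $\wt{A}(\theta)$ real a.e., hence $\overline{A_n}=A_{-n}$; since $A_n$ is already real this collapses to $A_{-n}=A_n$, which is exactly partial symmetry.

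There is no serious obstacle, only a small bookkeeping issue that I would flag: one must confirm that, in (iii), the matrix $T(\wt{A})=[A_{i-j}]_{i,j\ge 0}$ being real involves \emph{all} $A_n$ for $n\in\mathbb{Z}$ (because $i-j$ sweeps $\mathbb{Z}$ as $i,j$ sweep $\mathbb{N}_0$), not merely $n\ge 0$; otherwise the symmetry $A_{-n}=A_n$ would not close the loop. Everything else is direct from Fourier uniqueness.
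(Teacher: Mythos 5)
The paper states this proposition without giving a proof, so there is no argument of the authors' own to compare against; I will therefore just assess your argument on its merits. Your Fourier-coefficient approach is the natural one and it is correct. The two identities you single out (the $n$-th Fourier coefficient of $\wt{A}(-\theta)$ is $A_{-n}$, and that of $\overline{\wt{A}(-\theta)}$ is $\overline{A_n}$), together with $L^2$ uniqueness of Fourier series applied entry-wise, correctly translate evenness of $\wt{A}$ and realness of $\wt{A}(\theta)$ a.e.\ into the coefficient relations $A_{-n}=A_n$ and $A_n\in\mathbb{R}$. Your auxiliary lemma, that every member of $\mathcal{R}(\wt{A})$ is real if and only if $\wt{A}(\theta)$ is real a.e., is exactly the bridge needed to pass between the a.e.\ formulation and the essential-range phrasing used in (ii) and (iii), and your remark that realness of the block Toeplitz matrix $[A_{i-j}]_{i,j\ge 0}$ already constrains $A_n$ for \emph{all} $n\in\mathbb{Z}$ (since $i-j$ sweeps $\mathbb{Z}$) is precisely the point that closes the (iii) $\Rightarrow$ (i) step.

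One cosmetic slip worth noting: the labels $(\Rightarrow)$ and $(\Leftarrow)$ in your auxiliary lemma are interchanged relative to the statement as you wrote it. With P being ``every matrix in $\mathcal{R}(\wt{A})$ is real'' and Q being ``$\wt{A}(\theta)$ is real a.e.,'' the argument you call $(\Leftarrow)$ (using that $\wt{A}(\theta)\in\mathcal{R}(\wt{A})$ for a.e.\ $\theta$) actually proves P $\Rightarrow$ Q, and the $\epsilon<\|\im B\|$ argument you call $(\Rightarrow)$ proves Q $\Rightarrow$ P. Both arguments are correct as given; only the tags should be swapped, so this does not affect the validity of the proof.
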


\par Here we point out that symplectic eigenvalues of $T_n(\wt{A})$ and $\wt{A}(\theta)$
 are defined only when $T(\wt{A})$ is a partially symmetric operator on $l_{2k}^2$, and $T_n(\wt{A})$ and $\wt{A}(\theta)$ are positive definite.

\par A stationary G-chain $\Sigma=\begin{bmatrix}A_{i-j}\end{bmatrix}$ 
is \emph{bounded} if it is 
bounded as a linear operator on $l^2_{2k},$
and is \emph{partially symmetric} if it is a partially symmetric linear operator.
%If $A_{-n}=A_n$ for all $n\in\mathbb{Z},$
%then we say that $\Sigma$ is partially symmetric.
The following theorem gives a characterisation of a 
partially symmetric bounded stationary G-chain
in terms of its symbol.

\begin{thm}\label{thm4}
Let $\Sigma$ be an infinite real matrix. 
Then $\Sigma$ is a partially symmetric bounded stationary G-chain if
and only if it is generated by an $\wt{A}$ in $L^\infty_{2k\times 2k}$ such that $\wt{A}(\theta)$ is a G-matrix for all
$\wt{A}(\theta)$ in $\mathcal{R}(\wt{A}).$
\end{thm}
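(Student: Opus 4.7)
The plan is to translate the two defining features of a G-chain (positive definiteness of every truncation $T_n$ and the CCR inequality $T_n + \tfrac{\imath}{2} J_{2nk} \ge 0$) into pointwise conditions on the symbol, using the propositions already proved. Partial symmetry and boundedness feed directly into Propositions \ref{prop1} and \ref{propps}, so only the CCR piece needs a little dexterity: the key device is the auxiliary \emph{Hermitian} matrix-valued symbol
\[
\wt{B}(\theta) \;:=\; \wt{A}(\theta) + \tfrac{\imath}{2}\,J_{2k},
\]
which lies in $L^\infty_{2k\times 2k}$ because $\wt{A}$ does. Its Fourier coefficients are $B_0=A_0+\tfrac{\imath}{2}J_{2k}$ and $B_n=A_n$ for $n\neq 0$, and since $J_{2nk}=\underbrace{J_{2k}\oplus\cdots\oplus J_{2k}}_{n\text{ times}}$, one checks immediately that $T_n(\wt{B})=T_n(\wt{A})+\tfrac{\imath}{2}J_{2nk}$ for every $n$. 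Also $\mathcal{R}(\wt{B})=\mathcal{R}(\wt{A})+\tfrac{\imath}{2}J_{2k}$.

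For the forward direction I assume $\Sigma$ is a partially symmetric bounded stationary G-chain. Boundedness together with Proposition \ref{prop1} produces $\wt{A}\in L^\infty_{2k\times 2k}$ with $\Sigma=T(\wt{A})$. Applying Proposition \ref{propps} gives that every $\wt{A}(\theta)\in\mathcal{R}(\wt{A})$ is real (and $\wt{A}$ is even). Positive definiteness of every $T_n(\wt{A})$ together with Proposition \ref{propf5} yields positive semidefiniteness of every $\wt{A}(\theta)\in\mathcal{R}(\wt{A})$. The CCR inequality $T_n(\wt{A})+\tfrac{\imath}{2}J_{2nk}\ge 0$ for all $n$ now reads $T_n(\wt{B})\ge 0$ for all $n$, so Proposition \ref{prop3} (applied to the Hermitian-valued symbol $\wt{B}$) forces every $\wt{B}(\theta)\in\mathcal{R}(\wt{B})$ to be positive semidefinite, i.e.\ $\wt{A}(\theta)+\tfrac{\imath}{2}J_{2k}\ge 0$ throughout $\mathcal{R}(\wt{A})$. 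Combined with the previous two items this says exactly that $\wt{A}(\theta)$ is a G-matrix on $\mathcal{R}(\wt{A})$.

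For the reverse direction I assume $\Sigma=T(\wt{A})$ is a real infinite matrix with $\wt{A}(\theta)$ a G-matrix on $\mathcal{R}(\wt{A})$. Boundedness is immediate since $\wt{A}\in L^\infty_{2k\times 2k}$. Because G-matrices are real, Proposition \ref{propps} (iii)$\Rightarrow$(i) hands me partial symmetry. Because G-matrices are positive definite, every essential value is positive definite, so Proposition \ref{propf5} gives that $T_n(\wt{A})$ is positive definite for every $n$. Finally, the G-matrix inequality on $\mathcal{R}(\wt{A})$ means every element of $\mathcal{R}(\wt{B})$ is positive semidefinite, so Proposition \ref{prop3} (second statement) yields $T_n(\wt{B})\ge 0$ for all $n$, i.e.\ $T_n(\wt{A})+\tfrac{\imath}{2}J_{2nk}\ge 0$. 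Hence each $T_n(\wt{A})$ is itself a G-matrix, and the sequence $\{T_n(\wt{A})\}$ is a partially symmetric bounded stationary G-chain.

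The only place requiring genuine attention is the appeal to Proposition \ref{prop3} for the Hermitian (not real-symmetric) symbol $\wt{B}$: one has to note that the proposition is stated for arbitrary $\wt{A}\in L^\infty_{2k\times 2k}$, so its proof (via the $C^*$-isomorphism $\wt{A}\mapsto M_{\wt{A}}$ already cited in Section \ref{S2}) applies verbatim to complex Hermitian symbols. Everything else is bookkeeping: matching the Fourier coefficients of $\wt{B}$ against those of $\wt{A}$, and identifying the block pattern of $J_{2nk}$ with the block pattern produced by truncation.
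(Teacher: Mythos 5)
Your proof is correct and follows essentially the same route as the paper: the same shifted symbol $\wt{B}=\wt{A}+\tfrac{\imath}{2}J_{2k}$ with $T_n(\wt{B})=T_n(\wt{A})+\tfrac{\imath}{2}J_{2nk}$, combined with Propositions \ref{prop1}, \ref{prop3} and \ref{propps}. Your additional appeal to Proposition \ref{propf5} just makes explicit the positive definiteness of the truncations, which the paper's proof leaves implicit (it follows anyway, since a real symmetric $A$ with $A+\tfrac{\imath}{2}J\ge 0$ is automatically positive definite).
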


\begin{proof}
By Proposition \ref{prop1} $\Sigma$ is a bounded linear operator on $l^2_{2k}$ if and only if $\Sigma=T(\wt{A})$ for some $\wt{A}$ in $L^\infty_{2k\times 2k}.$
%The existence of $\wt{A}$ in $L^\infty_{2k\times 2k}$ follows from Proposition \ref{prop1}.  So let us denote $\Sigma$ by $T(\wt{A}).$
%This gives existence of $\wt{A}$. Also $T(\wt{A})$ is bounded if and only if $\wt{A}\in L^\infty_{2k\times 2k}.$
By Propositions \ref{prop3} and \ref{propps} we know that $\wt{A}(\theta)$ are real positive semidefinite
matrices for all $\wt{A}(\theta)\in\mathcal{R}(\wt{A})$ if and only if
$T(\wt{A})$ is partially symmetric and positive semidefinite.
Let $\Sigma_0=T(\wt{A})+\frac{\imath}{2}J_\infty,$
where $J_\infty$ is the infinite block diagonal matrix $\oplus_{\mathbb{N}}J_2.$
Clearly $\Sigma_0$ is the infinite $2k\times 2k$ block Toeplitz matrix
corresponding to the sequence $\langle
B_n\rangle_{n\in\mathbb{Z}},$ where 
\[B_n=\begin{cases}
A_n & n\ne 0,\\
A_0+\frac{\imath}{2}J_{2k} & n=0.
\end{cases}\]
One can see that $\Sigma_0$ is generated by the function
$\wt{B}=\wt{A}+\frac{\imath}{2}J_{2k}.$ Now $\wt{A}(\theta)$ is a G-matrix
if and only if $\wt{B}(\theta)$ is positive semidefinite.
Similarly, $T(\wt{A})$ is a G-chain if and only if $T_n(\wt{B})$
is positive semidefinite for every $n\in\mathbb{N}.$ Hence we obtain the theorem by
using Proposition \ref{prop3}.
\end{proof}

%%%%%%%%%%%%%%%%%%%%%%%%%%%%%%%%%%%%%%%%%%%%%%%%%%%%%%%%%%%%

\section{Symplectic numerical range} \label{S3}

\par Let $\mathcal{H}$ be a real separable Hilbert space. We denote
the direct sum $\mathcal{H} \oplus \mathcal{H}$ by $\wt{\mathcal{H}}.$ It is easy to
see that the space $\wt{\mathcal{H}}$ is isomorphic to
$\bigoplus_\mathbb{N} \mathcal{K}$ where $\mathcal{K}$ is a two dimensional real
Hilbert space and the operator $J = \begin{bmatrix} 0
& I \\ -I & 0 \end{bmatrix}$ on $\wt{\mathcal{H}}$ is orthogonally
equivalent to $\bigoplus_{\mathbb{N}} J_2.$
Henceforth we will identify $\wt{\mathcal{H}}$  with  $\bigoplus_\mathbb{N} \mathcal{K}$
and the operator $J$ with $\bigoplus_{\mathbb{N}} J_2.$

\begin{defin}
Let $A$ be a positive definite operator on
$\wt{\mathcal{H}}.$ We define the symplectic numerical range
of $A$ to be the set 
\[W_s(A) = \left\{ \frac{1}{2} ( \langle u, A u\rangle +
\langle v, Av \rangle): \langle u,Jv\rangle=1,\, u, \, v \in \wt{\mathcal{H}}\right\}.\]
\end{defin}

%{\color{red}{Since $\langle u,Jv\rangle=1$ implies $\left\langle \alpha u,J\frac{v}{\alpha}\right\rangle=1,$ we see that the symplectic numerical range of any positive definite operator is not bounded above.}}
This is a subset of $(0,\infty)$. It is unbounded as the set of vectors $(u,v)$  with $\langle u, J v \rangle =1$ is unbounded.
%We can see that $W_s(A)$ is not bounded from above.
%This is an unbounded subset of $(0,\infty).$
An infinite dimensional version of Williamson's theorem 
was proved in \cite{tiju}:
for any
positive invertible operator $A$ on $\wt{\mathcal{H}}$ there exists a positive invertible
operator $P$ on  $\mathcal{H}$
and a symplectic transformation $L: \wt{\mathcal{H}} \to \wt{\mathcal{H}}$ such that 
\[ A = L \begin{bmatrix} P & 0 \\ 0 & P \end{bmatrix} L^T.\]
The \emph{symplectic spectrum} of $A$ is the spectrum of the positive
invertible operator  $P.$
If $A$ is a $2n\times 2n$ real positive definite matrix, then
its symplectic spectrum is the set of its symplectic eigenvalues
$\{d_1(A),\ldots,d_n(A)\}\subseteq (0,\infty).$
%real Hilbert space, then the symplectic spectrum of $A$ is
%the set of its symplectic eigenvalues.
We denote by
$\sigma_s(A)$ the symplectic spectrum of $A.$

\begin{prop} \label{prop4}
Let $\mathcal{H}$ be a real separable Hilbert space and $A$ a
bounded positive invertible operator on $\wt{\mathcal{H}}.$ Then
\begin{enumerate}[(i)]
\item $W_s(A) = W_s(MAM^T)$ for every symplectic transformation $M.$
\item $\sigma_s(A)\subseteq\overline{W_s(A)}=[\inf \sigma_s(A),\infty).$
\item If $\mathcal{H}$ is finite-dimensional,
then $W_s(A)$ is the closed set $[d_1(A),\infty),$
where $d_1(A)$ is the minimum symplectic eigenvalue of $A.$
\end{enumerate}
\end{prop}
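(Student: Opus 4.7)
The proof naturally splits into first handling (i), then using it together with the infinite-dimensional Williamson decomposition to reduce (ii) and (iii) to the block-diagonal model $B=\begin{bmatrix}P & 0\\ 0 & P\end{bmatrix}$, and finally analyzing $W_s(B)$ directly.

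For (i), I would define the map $(u,v)\mapsto(M^T u, M^T v)$ and verify it is a bijection on pairs satisfying the constraint. Since $M$ is symplectic we have both $M^T J M=J$ and $M J M^T = J$, hence $M^{-1} J M^{-T}=J$. Then $\langle u,Jv\rangle = \langle M^{-T}(M^Tu), J M^{-T}(M^T v)\rangle = \langle M^T u, J M^T v\rangle$, while $\langle u, MAM^T u\rangle = \langle M^T u, A M^T u\rangle$ (and similarly for $v$). So the constraint and the value of the expression are preserved, giving $W_s(MAM^T)=W_s(A)$.

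For (ii), I would apply the infinite-dimensional Williamson theorem cited from \cite{tiju}: $A=L\begin{bmatrix}P & 0\\ 0 & P\end{bmatrix}L^T$ with $L$ symplectic and $P$ positive invertible on $\mathcal{H}$. By (i) it suffices to analyze $B=\begin{bmatrix}P & 0\\ 0 & P\end{bmatrix}$, and $\sigma_s(A)=\sigma(P)$. Writing $u=(u_1,u_2)$, $v=(v_1,v_2)$, the constraint becomes $\langle u_1,v_2\rangle-\langle u_2,v_1\rangle=1$, and the value $\tfrac12(\langle u,Bu\rangle+\langle v,Bv\rangle)$ equals $\tfrac12\bigl(\langle u_1,Pu_1\rangle+\langle u_2,Pu_2\rangle+\langle v_1,Pv_1\rangle+\langle v_2,Pv_2\rangle\bigr)$. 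For the lower bound, use $P\ge (\inf\sigma(P))\cdot I$ and Cauchy--Schwarz/AM--GM on the constraint: $1\le \|u_1\|\|v_2\|+\|u_2\|\|v_1\|\le \tfrac12(\|u\|^2+\|v\|^2)$. This forces $\|u\|^2+\|v\|^2\ge 2$, so every element of $W_s(B)$ is at least $\inf\sigma(P)$. For density of $[\inf\sigma(P),\infty)$ in $\overline{W_s(B)}$, for each $\lambda\in\sigma(P)$ pick unit approximate eigenvectors $\psi_n$ with $\langle\psi_n,P\psi_n\rangle\to\lambda$, set $u=(t\psi_n,0)$, $v=(0,t^{-1}\psi_n)$ so $\langle u,Jv\rangle=1$, and compute the value to be $\tfrac12(t^2+t^{-2})\langle\psi_n,P\psi_n\rangle$. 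As $t\in(0,\infty)$ varies, $\tfrac12(t^2+t^{-2})$ sweeps $[1,\infty)$, producing values arbitrarily close to $[\lambda,\infty)$; letting $\lambda$ approach $\inf\sigma(P)$ gives $[\inf\sigma(P),\infty)\subseteq\overline{W_s(B)}$. Containment $\sigma_s(A)\subseteq\overline{W_s(A)}$ then follows trivially from $\sigma(P)\subseteq[\inf\sigma(P),\infty)=\overline{W_s(B)}$.

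For (iii), in finite dimensions Williamson's theorem gives actual diagonal $P=\operatorname{diag}(d_1(A),\ldots,d_k(A))$ with a true eigenvector $\psi$ of $P$ at eigenvalue $d_1(A)$. Repeating the construction above with this honest $\psi$ and $t\in(0,\infty)$ shows $W_s(B)\supseteq[d_1(A),\infty)$, while the lower bound from (ii) shows $W_s(B)\subseteq[d_1(A),\infty)$; so $W_s(A)=W_s(B)=[d_1(A),\infty)$, which is closed.

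The main technical hurdle is the sharpness in (ii): getting the bound $\tfrac12(\|u\|^2+\|v\|^2)\ge 1$ exactly right from the sign-indefinite symplectic constraint $\langle u_1,v_2\rangle-\langle u_2,v_1\rangle=1$, and ensuring that in infinite dimensions the approximate-eigenvector construction suffices when $\inf\sigma(P)$ is not actually attained (hence only closure, not the set itself, equals the half-line).
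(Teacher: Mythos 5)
Your proof is correct, and for parts (ii) and (iii) it follows a genuinely different route from the paper. The paper's argument for (ii) proceeds in two separate steps: it first shows that $[\alpha,\infty)\subseteq W_s(A)$ whenever $\alpha\in W_s(A)$, via the scaling $u\mapsto tu$, $v\mapsto v/t$ and the intermediate value theorem (establishing the half-line structure of $\overline{W_s(A)}$), and then identifies $\inf W_s(A)$ by comparing $W_s(\hat P)$ with the ordinary numerical range $W(\hat P)$, using the inequality $\langle u, Jv\rangle\le 1$ for unit vectors. For (iii), the paper simply cites the variational characterization $d_1(A)=\min_{\langle u,Jv\rangle=1}\tfrac{1}{2}(\langle u,Au\rangle+\langle v,Av\rangle)$ from Theorem~5 of the Bhatia--Jain reference and combines it with the half-line structure. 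You instead reduce immediately to the Williamson block model $\hat P=\begin{bmatrix}P & 0\\ 0 & P\end{bmatrix}$, prove the lower bound $W_s(\hat P)\subseteq[\inf\sigma(P),\infty)$ by Cauchy--Schwarz and AM--GM on the constraint $\langle u_1,v_2\rangle-\langle u_2,v_1\rangle=1$ (forcing $\|u\|^2+\|v\|^2\ge 2$), and produce the entire half-line by the explicit family $u=(t\psi_n,0)$, $v=(0,t^{-1}\psi_n)$ with (approximate) eigenvectors $\psi_n$; in finite dimensions the approximation is unnecessary and the construction gives equality outright, reproving the cited variational formula in the special case needed. Both routes use the same essential estimate ($\|J\|=1$) but organize it differently: the paper's version is more modular and leans on an external reference for (iii), while yours is a self-contained direct computation in the diagonalized model that handles (ii) and (iii) by one unified construction. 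One small point worth making explicit when writing this up: your $t$-parametrized family already shows by itself that $W_s(A)$ is an interval unbounded above (since $\tfrac12(t^2+t^{-2})$ sweeps $[1,\infty)$ continuously), so the IVT step the paper uses is in fact built into your construction rather than omitted.
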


\begin{proof}
Part (i)  follows from the fact that
$\langle Mu,JMv\rangle=\langle u,Jv\rangle$
for every symplectic transformation $M.$

\par Let $u,v\in\wt{\mathcal{H}}$ be such that $\langle u,Jv\rangle =1.$
Let $\alpha=\frac{1}{2}\left(\langle u,Au\rangle+\langle v,Av\rangle\right),$ $\alpha_1=\langle u,Au\rangle/2$ and $\alpha_2=\langle v,Av\rangle/2.$
For any $t>0,$ $\langle tu,Jv/t\rangle=1.$
Let
\begin{equation*}
\alpha(t)=\frac{\langle tu,A(tu)\rangle+\langle v/t,A(v/t)\rangle}{2}
= t^2\alpha_1+\frac{1}{t^2}\alpha_2.
\end{equation*}
Clearly $\alpha(t)$ is continuous in $t$ and $\alpha(1)=\alpha.$
Since $\lim_{t\to\infty}\alpha(t)=\infty,$
by the intermediate value theorem, $[\alpha,\infty)\subseteq W_s(A).$
Thus $\overline{W_s(A)}=[\inf W_s(A),\infty).$
Now, let $P$ be the positive invertible operator on $\mathcal{H}$ such that
$$A=L\hat{P}L^T,$$
where $L$ is a symplectic transformation on $\wt{\mathcal{H}}$ and $\hat{P}=\begin{bmatrix}P & O\\O & P\end{bmatrix}.$
We know that $\sigma_s(A)=\sigma(P)=\sigma(\hat{P}).$
Thus, we only need to show that
$\inf W_s(A)=\inf \sigma(\hat{P}).$
Let $u,v$ be two distinct unit vectors in $\wt{\mathcal{H}}.$
Without loss of generality, we can assume that
$\langle u,Jv\rangle>0.$
Clearly $\langle u,Jv\rangle\le 1.$
Let $u_0=u/\sqrt{\langle u,Jv\rangle}$ and $v_0=v/\sqrt{\langle u,Jv\rangle}.$
Then $\langle u_0,Jv_0\rangle=1,$ and
\begin{equation}
\frac{\langle u_0,Au_0\rangle+\langle v_0,Av_0\rangle}{2}=\frac{\langle u,Au\rangle+\langle v,Av\rangle}{2\langle u,Jv\rangle} 
\ge\frac{\langle u,Au\rangle+\langle v,Av\rangle}{2}.\label{eqntt1}
\end{equation}
Since the left-hand side of \eqref{eqntt1} belongs to $W_s(\hat{P})$
and the right-hand side to $W(\hat{P}),$ it follows that
\begin{equation}
\inf W_s(\hat{P})\ge\inf W(\hat{P}).\label{eqnt2}
\end{equation}
Now let $x$ be any unit vector in $\mathcal{H},$
and let $u=\frac{1}{\sqrt{2}}(x\oplus x)$ and $v=\frac{1}{\sqrt{2}}(-x\oplus x)$. 
Then $\langle u,Jv\rangle=1.$
We see that
$$\langle x,Px\rangle=\frac{\langle u,\hat{P}u\rangle+\langle v,\hat{P}v\rangle}{2}.$$
This implies that
\begin{equation}
\inf W(P)\ge \inf W_s(\hat{P}).\label{eqnt3}
\end{equation}
Combining \eqref{eqnt2} and \eqref{eqnt3}, and using the fact that
$\inf W(\hat{P})=\inf W(P)=\inf \sigma(P),$
we obtain (ii).

\par When $\mathcal{H}$ is finite-dimensional,  we have 
$$d_1(A)={\underset{\underset{\langle u,Jv\rangle=1}{u,v\in\mathcal{H}}}{\min}}\frac{\langle u,Au\rangle+\langle v,Av\rangle}{2}.$$
(See Theorem 5 of \cite{bhatia-jain}.)
This gives part (iii).
\end{proof}

\par Let $\wt{A}\in L^\infty_{2k\times 2k}$ be such that all matrices $\wt{A}(\theta)$ in $\mathcal{R}(\wt{A})$ are real positive definite. Then the symplectic numerical range of
$\wt{A}$ is the set 
\begin{multline*}
W_s(\wt{A}) = \left\{ \frac{1}{2\pi} \int_{-\pi}^\pi
\frac{\langle \wt{u}(\theta), \wt{A}(\theta)\wt{u}(\theta)\rangle +
\langle \wt{v}(\theta), \wt{A}(\theta)\wt{v}(\theta)\rangle}{2} \dd \theta
: \right.\\
\left. \frac{1}{2\pi}
\int_{-\pi}^\pi \langle \wt{u}(\theta),
J\wt{v}(\theta)\rangle \dd \theta
=1,\;\wt{u}, \wt{v} \in L_{2k}^2\right\}.
\end{multline*}
%By Proposition \ref{prop3}, $A$ is positive invertible if and
%only if $A(\theta)$ is positive definite matrix for almost
%all $\theta.$ 

\par We next give a relationship between
$W_s(\wt{A})$ and $W_s(\wt{A}(\theta))$ for $\wt{A}(\theta)\in\mathcal{R}(\wt{A}).$ 

\begin{thm}\label{the1}
Let $\wt{A}$ be an element of $L^\infty_{2k\times 2k}$ such that all matrices $\wt{A}(\theta)$ in $\mathcal{R}(\wt{A})$ are real positive definite.
The set $\overline{W_s(\wt{A})}$ is the same as the closed convex hull of $\bigcup_{\wt{A}(\theta)\in \mathcal{R}(\wt{A})}
W_s(\wt{A}(\theta)).$
\end{thm}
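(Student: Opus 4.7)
My plan is to identify both sides of the claimed equality as the half-line $[d_*,\infty)$, where
\[
d_* := \min\{d_1(B) : B\in\mathcal R(\wt A)\};
\]
the minimum is attained because $\mathcal R(\wt A)$ is compact and $B\mapsto d_1(B)$ is continuous on the real positive definite matrices. For the right-hand side, Proposition~\ref{prop4}(iii) gives $W_s(B)=[d_1(B),\infty)$ for each $B\in\mathcal R(\wt A)$, so
\[
\bigcup_{B\in\mathcal R(\wt A)}W_s(B)=[d_*,\infty),
\]
which is already closed and convex. It therefore suffices to establish $\overline{W_s(\wt A)}=[d_*,\infty)$.

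First I would adapt the scaling argument of Proposition~\ref{prop4}(ii) to the integral setting: for any admissible $(\wt u,\wt v)$ with $\tfrac{1}{2\pi}\int\langle\wt u,J\wt v\rangle\,d\theta=1$, the substitution $(t\wt u,\wt v/t)$ preserves the normalising constraint and turns the value into $\alpha(t)=t^2\alpha_1+t^{-2}\alpha_2$, with $\alpha_1,\alpha_2>0$ because pointwise positive definiteness of $\wt A(\theta)$ a.e.\ rules out $\wt u\equiv 0$ or $\wt v\equiv 0$. The intermediate value theorem then forces $[\alpha,\infty)\subseteq W_s(\wt A)$, so $\overline{W_s(\wt A)}=[\inf W_s(\wt A),\infty)$. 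To see $\inf W_s(\wt A)\le d_*$, pick $B\in\mathcal R(\wt A)$ and $u_0,v_0\in\mathbb R^{2k}$ with $\langle u_0,Jv_0\rangle=1$ realising $\tfrac{1}{2}(\langle u_0,Bu_0\rangle+\langle v_0,Bv_0\rangle)=d_1(B)$ (via Proposition~\ref{prop4}(iii)). For $\epsilon>0$, the set $I_\epsilon=\{\theta:\|\wt A(\theta)-B\|<\epsilon\}$ has positive measure by the definition of essential range; the concentrated pair $\wt u_\epsilon=\sqrt{2\pi/m(I_\epsilon)}\,\chi_{I_\epsilon}u_0$, $\wt v_\epsilon=\sqrt{2\pi/m(I_\epsilon)}\,\chi_{I_\epsilon}v_0$ is admissible and produces the $I_\epsilon$-average of $\tfrac{1}{2}(\langle u_0,\wt A(\theta)u_0\rangle+\langle v_0,\wt A(\theta)v_0\rangle)$, which lies within $\tfrac{\epsilon}{2}(\|u_0\|^2+\|v_0\|^2)$ of $d_1(B)$; letting $\epsilon\to 0$ yields $d_1(B)\in\overline{W_s(\wt A)}$.

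The reverse inequality $\inf W_s(\wt A)\ge d_*$ is the crux and rests on a pointwise symplectic Cauchy--Schwarz bound
\[
d_1(A)\,|\langle u,Jv\rangle|\;\le\;\tfrac{1}{2}\bigl(\langle u,Au\rangle+\langle v,Av\rangle\bigr)\qquad (u,v\in\mathbb C^{2k})
\]
valid for every real positive definite $A$. I would prove this by invoking Williamson's theorem to reduce to the diagonal form $A=\bigoplus_i d_i(A)I_2$ (a real symplectic change of variables preserves $\langle u,Au\rangle$ and $\langle u,Jv\rangle$ alike), then combining AM--GM, the bound $\langle w,Dw\rangle\ge d_1(A)\|w\|^2$, and the ordinary Cauchy--Schwarz inequality $|\langle w,Jw'\rangle|\le\|w\|\|w'\|$ in $\mathbb C^{2k}$. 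Applied pointwise with $A=\wt A(\theta)$, using $d_1(\wt A(\theta))\ge d_*$ almost everywhere, and finished with the triangle inequality for integrals,
\[
\alpha\;\ge\;\frac{d_*}{2\pi}\int_{-\pi}^\pi|\langle\wt u(\theta),J\wt v(\theta)\rangle|\,d\theta\;\ge\;d_*\left|\frac{1}{2\pi}\int_{-\pi}^\pi\langle\wt u,J\wt v\rangle\,d\theta\right|=d_*.
\]
The chief obstacle is this pointwise inequality: since $\langle u,Jv\rangle$ is generally complex for $u,v\in\mathbb C^{2k}$, one cannot invoke the real Williamson-based characterisation of $d_1(A)$ directly, and the Williamson reduction to block-diagonal form is essential before standard scalar inequalities can be brought to bear.
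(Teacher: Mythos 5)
Your argument is correct, but it takes a genuinely different route on the harder inclusion. Both you and the paper prove $W_s(B)\subseteq\overline{W_s(\wt A)}$ for each $B\in\mathcal R(\wt A)$ by the same concentration construction; you apply it only to the minimiser, after reducing the whole claim, via the scaling argument of Proposition~\ref{prop4}(ii), to the single computation of $\inf W_s(\wt A)$. The divergence is in the reverse direction: the paper argues that every element of $W_s(\wt A)$ is a limit of Riemann-sum-like convex combinations of the quantities $\tfrac{1}{2}(\langle u_j,\wt A(\theta_j)u_j\rangle+\langle v_j,\wt A(\theta_j)v_j\rangle)$ with $\wt A(\theta_j)\in\mathcal R(\wt A)$, whereas you establish $\inf W_s(\wt A)\ge d_*=\mathfrak m_{\wt A}$ directly via the pointwise inequality $d_1(A)\,|\langle u,Jv\rangle|\le\tfrac{1}{2}(\langle u,Au\rangle+\langle v,Av\rangle)$ for \emph{complex} $u,v$. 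You are right that this inequality is the crux: Proposition~\ref{prop4}(iii) gives it only over real vectors, and the Williamson diagonalisation, AM--GM, and orthogonality-of-$J$ steps you describe are exactly what is needed (the bound $\langle w,Dw\rangle\ge d_1\|w\|^2$ survives for complex $w$ because $D$ is real symmetric, and the symplectic change of variables preserves both $\langle u,Au\rangle$ and $\langle u,Jv\rangle$ for complex vectors as well). Your route yields the explicit identification $\overline{W_s(\wt A)}=[\mathfrak m_{\wt A},\infty)$, which the paper records only afterwards as equation~\eqref{eqt1} deduced \emph{from} this theorem, while the paper's version is shorter on the page but glosses over the density-of-simple-combinations step. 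One detail to make explicit when you write this up: $\wt A(\theta)\in\mathcal R(\wt A)$ for almost every $\theta$ (a property of the essential range the paper records), which is what justifies $d_1(\wt A(\theta))\ge d_*$ almost everywhere so that the pointwise bound can be integrated over $[-\pi,\pi]$.
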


\begin{proof}
Let $B=\wt{A}(\theta)\in \mathcal{R}(\wt{A})$ and $\mu \in W_s(B).$ Then there
exists a pair $(u,v)$ in $\mathbb{R}^{2k}\times\mathbb{R}^{2k}$ such that $\langle u, J_{2k}v\rangle =1$ and $\mu = \frac{1}{2} (\langle u, Bu\rangle +
\langle v, Bv\rangle).$ 
For $n \in \mathbb{N},$  let $S_n$ be the set

\[S_n= \left\{ t: \|\wt{A}(t) -B\| <\frac{1}{n(\|u\|^2
+\|v\|^2)}\right\},\]
and let $m_n =m(S_n)$, the measure of $S_n$. 
Since $B\in\mathcal{R}(\wt{A}),$ $m_n>0$ for every $n.$
Define the vector functions $\wt{u}_n$ and $\wt{v}_n$ on $[-\pi,\pi]$ as
\[ \wt{u}_n(t) = \begin{cases}
\sqrt{\frac{2\pi}{m_n}}u & t\in S_n\\
0 & \text{otherwise,}
\end{cases}
\text{\qquad and \qquad}
\wt{v}_n(t) = \begin{cases}
\sqrt{\frac{2\pi}{m_n}}v & t\in S_n\\
0 & \text{otherwise.}
\end{cases}\]
Clearly $\wt{u}_n$ and $\wt{v}_n$ are in $L_{2k}^2,$
and $\langle\wt{u}_n,J_{2k}\wt{v}_n\rangle=1.$
Let
$\mu_n = \frac{1}{2} ( \langle \wt{u}_n, \wt{A} \wt{u}_n
\rangle + \langle \wt{v}_n, \wt{A} \wt{v}_n \rangle).$ 
Then $\mu_n \in W_s(\wt{A}),$  and we have
\begin{align*}
|\mu_n - \mu| &=
\left|\frac{1}{2\pi}\int_{-\pi}^{\pi}\frac{1}{2}\left(\langle
\wt{u}_n(t),\wt{A}(t)\wt{u}_n(t)\rangle+\langle\wt{v}_n(t),\wt{A}(t)\wt{v}_n(t)\rangle\right)
\dd t  -  \frac{1}{2}\left(\langle u,Bu\rangle+\langle v,Bv\rangle\right)\right|\\
%& \quad\left. - \, \frac{1}{2}\left(\langle u,Bu\rangle+\langle v,Bv\rangle\right)\right|\\
&= \left|\frac{1}{m_n}\int_{S_n}\frac{1}{2}\left(u,\wt{A}(t)u\rangle+\langle v,\wt{A}(t)v\rangle\right)\dd t  - \frac{1}{m_n}\int_{S_n}\frac{1}{2}\left(\langle u,Bu\rangle+\langle v,Bv\rangle\right)\dd t\right|\\
%&\quad \left. - \,\frac{1}{m_n}\int_{S_n}\frac{1}{2}\left(\langle u,Bu\rangle+\langle v,Bv\rangle\right)\dd t\right|\\
&= \left| \frac{1}{m_n} \int_{S_n} \frac{1}{2}
\left(\langle u, (\wt{A}(t)-B) u \rangle + \langle v, (\wt{A}(t)-B) v \rangle \right) \dd
t \right|\\
&\le \frac{1}{2m_n} \left(\|u\|^2 + \|v\|^2 \right) \int_{S_n} \left\|\wt{A}(t) -
B \right\| \dd t\\
&\le \frac{1}{2n}. 
\end{align*}
This proves $\mu_n\to \mu.$ Hence $W_s(B) \subseteq \overline{W_s(\wt{A})}.$
 Since $W_s(\wt{A})$
is convex, the closed convex hull of
$\bigcup_{B \in \mathcal{R}(\wt{A})} W_s(B)$ is contained in $\overline{W_s(\wt{A})}.$

\par To prove the reverse inclusion, we use the fact that every element of $W_s(\wt{A})$ is a
limit of  finite sums of the form 
\[ \sum_j \frac{\alpha_j}{4\pi} ( \langle u_j, \wt{A}(\theta_j) u_j
\rangle + \langle v_j, \wt{A}(\theta_j) v_j \rangle ),\]
where $\wt{A}(\theta_j) \in \mathcal{R}(\wt{A})$, and $\alpha_j \ge 0$ %for all $j$ 
are such that $\sum_j \alpha_j \langle u_j, J v_j \rangle =1.$
Let $\beta_j = \langle u_j, J v_j \rangle .$ Without loss of
generality we may assume that $\beta_j \ge 0$ for all $j.$
Replacing $u_j$ by $\sqrt{\beta_j} u_j$ and $v_j$ by
$\sqrt{\beta_j} v_j$ we can take $\langle u_j, J_{2k} v_j \rangle
=1$ for every $j,$ and $\sum \alpha_j =1.$
This shows that every element of $W_s(\wt{A})$ is a limit of
convex combinations of elements of $\bigcup_{\wt{A}(\theta)\in\mathcal{R}(\wt{A})} W_s(\wt{A}(\theta)).$
%This proves the equality of the two sets. 
\end{proof}

\par Let $\hat{u}, \, \hat{v} \in l_{2k}^2,$  $\hat{u}=(u_1, u_2, \ldots)$
and $\hat{v}=(v_1, v_2, \ldots).$
Define $\wt{u}(\theta)=
\sum u_n e^{\imath n \theta}$ and $\wt{v}(\theta)=
\sum v_n e^{\imath n \theta}.$ Clearly 
\[\langle \hat{u}, J \hat{v} \rangle = \frac{1}{2\pi} \int_{-\pi}^\pi
\langle \wt{u}(\theta), J_{2k} \wt{v}(\theta) \rangle \dd
\theta. \]
Using \eqref{eqn6} we see that 
\[W_s\left(T(\wt{A})\right) \subseteq W_s(\wt{A})\]
for every partially symmetric, bounded, positive invertible operator $T(\wt{A})$ on $l^2_{2k}.$
Since
$T_n(\wt{A})$ is a principal submatrix of $T(\wt{A}),$ we have 
\begin{equation}
W_s\left(T_n(\wt{A})\right) \subseteq W_s\left(T_{n+1}(\wt{A}) \right)\subseteq W_s\left(T(\wt{A})\right) \subseteq W_s(\wt{A}).
\end{equation}

Let $\wt{A}\in L^\infty_{2k\times 2k}$ be such that all matrices $\wt{A}(\theta)$ in $\mathcal{R}(\wt{A})$ are real positive definite.
Let %$\mathfrak{m}_{\wt{A}}$ be the essential infimum of the minimum symplectic eigenvalues $d_1(\wt{A}(\theta)),$ i.e.,
\begin{align}
\mathfrak{m}_{\wt{A}} &={\underset{\theta \in[-\pi,\pi]}{\textrm{essinf}}}
d_1(\wt{A}(\theta)).\label{eqnt1}
%\mathfrak{M}_A &= ess\sup_{\theta \in[-\pi,\pi]}
%d_k(A(\theta)),
\end{align}
%is the essential infimum of the smallest symplectic
%eigenvalue of $\wt{A}(\theta).$
Using Theorem \ref{the1} we
see that 
\begin{equation}
\mathfrak{m}_{\wt{A}} = \inf W_s(\wt{A}).\label{eqt1}
\end{equation}

%\par Recall that $T_n(A)$ is the $n \times n$  truncated
%block Toeplitz matrix corresponding to the matrix symbol
%$A.$ We will always denote by $d_1^{(n)} \le \cdots \le
%d_{nk}^{(n)}$ the symplectic eigenvalues of $T_n(A).$

\begin{thm}\label{the2}
Let $T(\wt{A})$ be a partially symmetric, bounded, positive invertible operator on $l^2_{2k}.$
Let $n\in\mathbb{N},$ and let $d$ be a symplectic eigenvalue of $T_n(\wt{A}).$
Then $d\ge \mathfrak{m}_{\wt A}.$
If $d=\mathfrak{m}_{\wt{A}},$ then $d_1(\wt{A}(\theta))$ is the constant $\mathfrak{m}_{\wt{A}}$ for almost all $\theta.$
\end{thm}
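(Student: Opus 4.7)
The first inequality follows from a short chain of inclusions. Since $T_n(\wt{A})$ is a $2kn\times 2kn$ real positive definite matrix, Proposition \ref{prop4}(iii) gives $W_s(T_n(\wt{A}))=[d_1(T_n(\wt{A})),\infty)$, and in particular $d_1(T_n(\wt{A}))=\inf W_s(T_n(\wt{A}))$. The inclusion chain displayed just before the theorem yields $W_s(T_n(\wt{A}))\subseteq W_s(\wt{A})$, and \eqref{eqt1} gives $\inf W_s(\wt{A})=\mathfrak{m}_{\wt{A}}$. Hence $d\ge d_1(T_n(\wt{A}))\ge \mathfrak{m}_{\wt{A}}$.

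For the rigidity statement, assume $d=\mathfrak{m}_{\wt{A}}$. The above chain then collapses to $d_1(T_n(\wt{A}))=\mathfrak{m}_{\wt{A}}$, and since $W_s(T_n(\wt{A}))$ is a closed half-line this minimum is attained: there exist real $u,v\in\mathbb{R}^{2kn}$ with $\langle u,J_{2kn}v\rangle=1$ and $\tfrac{1}{2}(\langle u,T_n(\wt{A})u\rangle+\langle v,T_n(\wt{A})v\rangle)=\mathfrak{m}_{\wt{A}}$. Padding $u,v$ by zeros to $\hat{u},\hat{v}\in l^2_{2k}$ and setting $\wt{u}(\theta)=\sum_{i=0}^{n-1}u_i e^{\imath i\theta}$, $\wt{v}(\theta)=\sum_{i=0}^{n-1}v_i e^{\imath i\theta}$, Proposition \ref{prop1} together with Parseval converts the attained minimum into
\[\frac{1}{2\pi}\int_{-\pi}^\pi f(\theta)\,\dd\theta=\mathfrak{m}_{\wt{A}},\qquad \frac{1}{2\pi}\int_{-\pi}^\pi\langle\wt{u}(\theta),J_{2k}\wt{v}(\theta)\rangle\,\dd\theta=1,\]
where $f(\theta)=\tfrac{1}{2}(\langle\wt{u}(\theta),\wt{A}(\theta)\wt{u}(\theta)\rangle+\langle\wt{v}(\theta),\wt{A}(\theta)\wt{v}(\theta)\rangle)$.

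The heart of the argument is a pointwise lemma: for any real positive definite $2k\times 2k$ matrix $B$ and any $u,v\in\mathbb{C}^{2k}$,
\[\tfrac{1}{2}(\langle u,Bu\rangle+\langle v,Bv\rangle)\ge d_1(B)\,|\re\langle u,J_{2k}v\rangle|.\]
I would prove this by writing $u=u_1+\imath u_2$ and $v=v_1+\imath v_2$ with $u_j,v_j\in\mathbb{R}^{2k}$, noting that the reality and symmetry of $B$ give $\langle u,Bu\rangle=\langle u_1,Bu_1\rangle+\langle u_2,Bu_2\rangle$ and $\re\langle u,J_{2k}v\rangle=\langle u_1,J_{2k}v_1\rangle+\langle u_2,J_{2k}v_2\rangle$, then applying the finite-dimensional variational formula for $d_1(B)$ (Theorem 5 of \cite{bhatia-jain}) to each real pair $(u_j,v_j)$ and combining with the triangle inequality. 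Taking $B=\wt{A}(\theta)$, writing $g(\theta)=\re\langle\wt{u}(\theta),J_{2k}\wt{v}(\theta)\rangle$ and using $d_1(\wt{A}(\theta))\ge\mathfrak{m}_{\wt{A}}$ a.e., integration produces the equality chain
\[\mathfrak{m}_{\wt{A}}=\frac{1}{2\pi}\int f\,\dd\theta\ge\frac{1}{2\pi}\int d_1(\wt{A}(\theta))|g|\,\dd\theta\ge\frac{\mathfrak{m}_{\wt{A}}}{2\pi}\int|g|\,\dd\theta\ge\mathfrak{m}_{\wt{A}}\Bigl|\frac{1}{2\pi}\int g\,\dd\theta\Bigr|=\mathfrak{m}_{\wt{A}}.\]
Equality throughout forces $(d_1(\wt{A}(\theta))-\mathfrak{m}_{\wt{A}})|g(\theta)|=0$ for almost every $\theta$.

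The step I expect to be the most delicate is upgrading this to $d_1(\wt{A}(\theta))=\mathfrak{m}_{\wt{A}}$ almost everywhere, rather than only on the set $\{g\ne 0\}$. Here I would exploit the special form of $\wt{u}$ and $\wt{v}$: they are trigonometric polynomials of degree at most $n-1$, so $g$ is itself a real trigonometric polynomial. Since $\tfrac{1}{2\pi}\int g\,\dd\theta=1\ne 0$, $g$ is not identically zero, hence its zero set on $[-\pi,\pi]$ is finite and in particular has measure zero. Thus $d_1(\wt{A}(\theta))=\mathfrak{m}_{\wt{A}}$ for almost every $\theta$, completing the proof.
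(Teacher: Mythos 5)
Your proof is correct and follows essentially the same route as the paper's, but it fills in several details the paper leaves terse or implicit, and one step of yours is genuinely cleaner. For the inequality $d\ge\mathfrak{m}_{\wt{A}}$, the paper argues by writing $d$ itself as $\tfrac12(\langle\hat u,T_n(\wt A)\hat u\rangle+\langle\hat v,T_n(\wt A)\hat v\rangle)$ with $\langle\hat u,J\hat v\rangle=1$ (Williamson), passing to the integral representation \eqref{eqn6}, and asserting the integrand is nonnegative a.e.; you instead observe $d\ge d_1(T_n(\wt A))=\inf W_s(T_n(\wt A))\ge\inf W_s(\wt A)=\mathfrak{m}_{\wt A}$, which is shorter and avoids the integral entirely for this half. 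Two places where your write-up is more careful than the paper's: (a) $\wt u(\theta),\wt v(\theta)$ are complex, so $\langle\wt u(\theta),J_{2k}\wt v(\theta)\rangle$ is not pointwise real; the paper's claim that the integrand is "nonnegative almost everywhere" needs the real/imaginary decomposition you supply via your pointwise lemma $\tfrac12(\langle u,Bu\rangle+\langle v,Bv\rangle)\ge d_1(B)|\re\langle u,J_{2k}v\rangle|$, and (b) the paper's final sentence ("So the last statement of the theorem by using Proposition \ref{prop4}(iii) and \eqref{eqnt1}") silently assumes $\langle\wt u(\theta),J_{2k}\wt v(\theta)\rangle\neq 0$ off a null set; your observation that $g=\re\langle\wt u,J_{2k}\wt v\rangle$ is a nonzero real trigonometric polynomial (because its mean is $1$) and hence vanishes only on a finite set is exactly the missing justification. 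Your proof of the pointwise lemma should make explicit that Theorem 5 of \cite{bhatia-jain} gives $\tfrac12(\langle u_j,Bu_j\rangle+\langle v_j,Bv_j\rangle)\ge d_1(B)|\langle u_j,J_{2k}v_j\rangle|$ (replace $v_j$ by $-v_j$ to handle the negative case) before summing and using the triangle inequality, but that is a routine detail. Overall the argument is sound and, if anything, more complete than the paper's.
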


\begin{proof}
Let $d$ be a symplectic eigenvalue of $T_n(\wt{A}).$ Then there
exist vectors $\hat{u} = (u_1, \cdots , u_n), \, \hat{v}= (v_1,
\cdots, v_n),$ $u_j, \, v_j \in \mathbb{R}^{2k}$
with  $\langle \hat{u}, J \hat{v} \rangle =1$ and 
\[d= \frac{1}{2} (\langle \hat{u}, T_n(\wt{A}) \hat{u} \rangle + \langle \hat{v},
T_n(\wt{A}) \hat{v} \rangle).\]
Let $\wt{u}$ and $\wt{v}$ be the elements of $L_{2k}^2$ given by
$\wt{u}(\theta) = \sum u_j e^{\imath j \theta}, \,
\wt{v}(\theta) = \sum v_j e^{\imath j \theta}.$ 
Then 
\begin{align*}
d - \mathfrak{m}_{\wt{A}} &= \frac{1}{2} (\langle \hat{u}, T_n(\wt{A}) \hat{u}
\rangle + \langle \hat{v}, T_n(\wt{A}) \hat{v} \rangle) -
\mathfrak{m}_{\wt{A}}\\
& =  \frac{1}{2\pi} \int_{-\pi}^\pi \left[ \frac{1}{2} \left(\left\langle
\wt{u}(\theta), \wt{A}(\theta) \wt{u}(\theta) \right\rangle + \left\langle \wt{v}(\theta),
\wt{A}(\theta) \wt{v}(\theta) \right\rangle\right)  -
\mathfrak{m}_{\wt{A}}\left\langle \wt{u}(\theta),J_{2k}\wt{v}(\theta)\right\rangle  \right) \dd
\theta. 
\end{align*}
Using \eqref{eqt1}, we know that the above integrand is nonnegative almost everywhere.
Hence, we have $d \ge \mathfrak{m}_{\wt{A}}.$
Also $d=\mathfrak{m}_{\wt{A}}$ if and only if 
\[\frac{1}{2}
\left(\langle
\wt{u}(\theta),\wt{A}(\theta)\wt{u}(\theta)\rangle+\langle\wt{v}(\theta),\wt{A}(\theta)\wt{v}(\theta)\rangle
\right)=\mathfrak{m}_{\wt{A}}\langle \wt{u}(\theta),J_{2k}\wt{v}(\theta)\rangle\] 
for almost all $\theta.$
So the last statement of the theorem by using Proposition \ref{prop4}(iii) and \eqref{eqnt1}.
\end{proof}
%%%%%%%%%%%%%%%%%%%%%%%%%%%%%%%%%%%%%%%%%%%%%%%%%%%%

\section{A Szeg\H o type theorem for symplectic eigenvalues
and applications} \label{S4}

\par We first recall some basic facts about symplectic
eigenvalues. See \cite{bhatia-jain, gosson}, and \cite{jain-h} for 		details.
A positive number $d$ is a symplectic eigenvalue of a
$2k\times 2k$ positive definite matrix $A$ if and only if
$\pm d$ are the eigenvalues of the (non-Hermitian) matrix $\imath
J_{2k}A.$ 
%Further if $d_1\le \cdots\le d_k$ are the
%symplectic eigenvalues of $A,$ and $\lambda_1 \le \cdots \le
%\lambda_k \le\lambda_{k+1} \le \cdots \le \lambda_{2k}$ are
%the eigenvalues of $A$ arranged in ascending order, then by
%symplectic interlacing theorem (which follows from Theorem 5
%of \cite{bhatia-jain}) $0< d_i\le \lambda_{k+i}$
%for all $i=1,\ldots,k.$
Thus each symplectic eigenvalue
$d_i$ of $A$ lies in the interval $[0,\|A\|].$
Let $T(\wt{A})$ be a bounded, partially symmetric, positive invertible operator on $l^2_{2k}$ generated by $\wt{A}.$
Let $d_1^{(n)}\le\cdots\le d_{nk}^{(n)}$ denote the symplectic eigenvalues of $T_n(\wt{A})$ arranged in increasing order.
 Since $\wt{A}\in L^\infty_{2k\times 2k},$
each $d_i^{(n)}\le \|T_n(\wt{A})\|\le\|\wt{A}\|.$

\begin{thm}\label{thm_main}
Let $\Sigma$ be a partially symmetric, bounded positive invertible operator on $l^2_{2k}$
generated by $\wt{A}.$
Let $d_1^{(n)} \le \cdots \le d_{nk}^{(n)}$
denote the symplectic eigenvalues of $T_n(\wt{A}).$ Then for
every function $f$ continuous on $[0,\|\wt{A}\|],$
\begin{equation}\label{eqn_main}
\lim\limits_{n\to\infty} \frac{1}{n} \sum_{j=1}^{nk} f(d_j^{(n)}) =
\frac{1}{2\pi} \int_{-\pi}^{\pi} \sum_{j=1}^{k}
f\left(d_j(\wt{A}(\theta))\right) \dd\theta.
\end{equation}
\end{thm}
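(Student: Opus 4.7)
The plan is to reduce \eqref{eqn_main} to the case of even polynomial test functions and then extend by Weierstrass approximation. The starting point is the spectral characterisation noted in the preamble to this section: for a $2m \times 2m$ real positive definite matrix $A$, the spectrum of $\imath J_{2m} A$ is exactly the multiset $\{\pm d_j(A)\}_{j=1}^m$. Consequently, for every positive integer $p$,
$$\tr\bigl((\imath J_{2m} A)^{2p}\bigr) = 2 \sum_{j=1}^m d_j(A)^{2p}.$$
Applied to $A = T_n(\wt A)$ and to $A = \wt A(\theta)$, this converts both sides of \eqref{eqn_main} (when $f(x) = x^{2p}$) into trace expressions.

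Next I would exploit the compatibility of the symplectic form with the block structure: since $J_{2nk} = \bigoplus_{i=1}^{n} J_{2k}$ is block-diagonal with identical blocks $J_{2k}$, one has the identity
$$T_n(\imath J_{2k}\wt A) = \imath J_{2nk}\, T_n(\wt A),$$
so $\imath J_{2nk} T_n(\wt A)$ is itself the truncation of the block Toeplitz operator with (non-Hermitian) matrix symbol $\imath J_{2k}\wt A \in L^\infty_{2k \times 2k}$. Combining the two identities,
$$\frac{2}{n}\sum_{j=1}^{nk} (d_j^{(n)})^{2p} = \frac{1}{n}\tr\bigl(T_n(\imath J_{2k}\wt A)^{2p}\bigr).$$
The key analytic input is the block analogue of the weak Szeg\H o trace identity: for every $B \in L^\infty_{r \times r}$ and every positive integer $m$,
$$\lim_{n \to \infty} \frac{1}{n}\tr\bigl(T_n(B)^m\bigr) = \frac{1}{2\pi}\int_{-\pi}^{\pi} \tr\bigl(B(\theta)^m\bigr) \dd \theta.$$
This is a direct Fourier-coefficient calculation (the diagonal entries of $T_n(B)^m$ produce $n - O(1)$ copies of each term $\tr(B_{r_1}\cdots B_{r_m})$ with $r_1 + \cdots + r_m = 0$) and, crucially, requires no Hermiticity of $B$. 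Applied with $B = \imath J_{2k}\wt A$, it yields \eqref{eqn_main} for $f(x) = x^{2p}$, and hence, by linearity, for every even polynomial.

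To finish, every continuous $f$ on $[0,\|\wt A\|]$ factors as $f(x) = g(x^2)$ for some $g \in C([0,\|\wt A\|^2])$, so the Weierstrass approximation of $g$ by polynomials produces a uniform approximation of $f$ by even polynomials on $[0,\|\wt A\|]$. Since $d_j^{(n)} \le \|T_n(\wt A)\| \le \|\wt A\|$ and $d_j(\wt A(\theta)) \le \|\wt A(\theta)\| \le \|\wt A\|$ almost everywhere, the approximation is uniform on both sides of \eqref{eqn_main}; together with the fact that $T_n(\wt A)$ has exactly $nk$ symplectic eigenvalues, a standard $\varepsilon/3$ argument transfers the polynomial case to all continuous $f$. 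The main obstacle is the trace identity above for the non-Hermitian $L^\infty$ matrix symbol $\imath J_{2k}\wt A$: while it is a known and essentially elementary fact, it is the one place where we must go beyond the classical Hermitian Szeg\H o theorem, so the careful Fourier-coefficient computation suggested above is the real work of the proof.
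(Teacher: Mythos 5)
Your proposal is correct and follows essentially the same route as the paper: pass to the non-Hermitian symbol $\imath J_{2k}\wt{A}$ via $T_n(\imath J_{2k}\wt{A})=\imath J_{2nk}T_n(\wt{A})$, apply the first-order Szeg\H o moment asymptotics for block Toeplitz matrices to get \eqref{eqn_main} for even powers, and then extend to all continuous $f$ through the substitution $f(x)=g(x^2)$ and Weierstrass approximation. The only difference is that the paper simply cites Theorem 6.24 of B\"ottcher--Silbermann for the moment identity, whereas you propose to re-derive it; if you do, note that for a general $L^\infty$ symbol the ``$n-O(1)$ copies'' count is too optimistic and one needs an $o(n)$ error estimate (e.g.\ via Hankel-operator or approximation arguments), which is exactly the content of the cited result.
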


\begin{proof}
By Theorem 6.24 of \cite{bs}, we know that if $\wt{B}\in
L^\infty_{2k\times 2k},$ and $\lambda_1^{(n)}, \ldots,
\lambda_{2nk}^{(n)}$ are the eigenvalues of the $n\times n$
truncated block Toeplitz matrix $T_n(\wt{B}),$ then
\begin{equation}\label{eqna1}
\lim\limits_{n\to\infty} \frac{1}{n} \sum\limits_{j=1}^{2nk}
\left(\lambda_j^{(n)}\right)^{m} = \frac{1}{2\pi}
\int_{-\pi}^{\pi} \sum\limits_{j=1}^{2k}
\left(\lambda_j(\wt{B}(\theta))\right)^{m} \dd\theta,
\end{equation}
for every nonnegative integer $m.$
Suppose $\wt{B}=\imath J_{2k}\wt{A}.$
Then $T_n(\wt{B})=\imath J_{2nk}T_n(\wt{A}),$ and the
eigenvalues $\lambda_1^{(n)}, \ldots, \lambda_{2nk}^{(n)}$
of $T_n(\wt{B})$ are $\pm d_1^{(n)},\ldots,\pm
d_{nk}^{(n)}.$
Also the eigenvalues $\lambda_j(\wt{B}(\theta))$ of $\wt{B}(\theta)$ are $\pm d_j(\wt{A}(\theta)).$
Hence for every non-negative integer $m,$
\begin{equation}\label{eqna2}
\lim_{n\to\infty} \frac{1}{n} \sum_{j=1}^{nk}
\left(d_j^{(n)}\right)^{2m} = \frac{1}{2\pi}
\int_{-\pi}^{\pi} \sum_{j=1}^{k}
\left(d_j(\wt{A}(\theta))\right)^{2m} \dd\theta.
\end{equation}
By linearity, we can extend \eqref{eqna2} to polynomials in
$\left(d_j^{(n)}\right)^2,$ i.e.,
\begin{equation}\label{eqna3}
\lim_{n\to\infty} \frac{1}{n} \sum_{j=1}^{nk} p
\left({d_j^{(n)}}^2\right) = \frac{1}{2\pi}
\int_{-\pi}^{\pi} \sum_{j=1}^{k} p \left(d_j^2(\wt{A}(\theta))\right) \dd\theta
\end{equation}
for every polynomial $p.$ Each $d_j^{(n)}\in
\left[0,\|\wt{A}\|\right],$  and hence $\left(d_j^{(n)}\right)^2\in
\left[0,\|\wt{A}\|^2\right].$
Let $q(x)$ be any polynomial and let $s(x)=q(\sqrt{x}).$
Clearly $s$  is continuous on $\left[0,\|\wt{A}\|^2\right].$
For a given $\epsilon>0,$ we can find a polynomial $p$ such that
\[\sup_{x\in \left[0,\|\wt{A}\|^2\right]} |s(x)-p(x)|
<\epsilon.\]
Since $d_j(\wt{A}(\theta)) \le \|\wt{A}(\theta)\|$ and
$\|\wt{A}(\theta)\|
\le \|\wt{A}\|$ for almost all $\theta,$
%\begin{equation}
\begin{multline}\label{eqna4}
\left\vert \frac{1}{2\pi} \int_{-\pi}^{\pi}
s\left(d_j^2(\wt{A}(\theta))\right) \dd\theta - \frac{1}{2\pi}
\int_{-\pi}^{\pi} p\left(d_j^2(\wt{A}(\theta))\right) \dd\theta
\right\vert \\ \le \frac{1}{2\pi} \int_{-\pi}^{\pi}
\left\|s\left(d_j^2(\wt{A}(\theta))\right) -
p\left(d_j^2(\wt{A}(\theta))\right) \right\| \dd\theta \; \le \epsilon,
\end{multline}
%\end{equation}
holds for all $j=1,\ldots,k.$
Similarly,\begin{equation}\label{eqna5}
\left|s\left({d_j^{(n)}}^2\right) -
p\left({d_j^{(n)}}^2\right)\right| < \epsilon
\end{equation}
for all $j=1,\ldots,nk.$
Combining the relations \eqref{eqna3}, \eqref{eqna4}, and
\eqref{eqna5}, we see that
\[\lim_{n\to\infty} \frac{1}{n} \sum_{j=1}^{nk}
s\left({d_j^{(n)}}^2\right) = \frac{1}{2\pi}
\int_{-\pi}^{\pi} s\left(d_j^2(\wt{A}(\theta))\right) \dd\theta.\]
Since $s(x^2)=q(x),$ we have
\begin{equation}\label{eqna6}
\lim_{n\to\infty} \frac{1}{n} \sum_{j=1}^{nk}
q\left(d_j^{(n)}\right) = \frac{1}{2\pi} \int_{-\pi}^{\pi}
q\left(d_j(\wt{A}(\theta))\right) \dd\theta.
\end{equation}
Now let $f$ be any continuous function on $\left[0,
\|\wt{A}\|\right].$ Then by using the Weierstrass
approximation theorem and arguing as above, we can show that
\eqref{eqn_main} holds for $f.$
\end{proof}
%{\bf In \cite{krprb2} the above theorem was proved for the special case when
%$\wt{A}$ is a finite exponential polynomial.} 

\begin{rem}
\emph {
We know that $d$ is a symplectic eigenvalue of a positive definite matrix $A$ if and only if $\pm d$ are eigenvalues of the non-Hermitian matrix $iJA$. In \cite{tilli-2} Tilli has proved a very general version of Szeg\H o's limit theorem for non-Hermitian block Toeplitz matrices. It is possible to derive Theorem \ref{thm_main} from Tilli's general results. The proofs of the general version are, naturally, more intricate. We have given a short self-contained presentation for the special case we need.
}
%We know that $d$ is a symplectic eigenvalue of a positive definite matrix $A$ if and only if $\pm d$ is an eigenvalue of the non-Hermitian matrix $\imath J_{2k}A.$
%In \cite{tilli-2} Tilly obtained a version of the Szeg\H o limit theorem for eigenvalues of non-Hermitian
%block Toeplitz matrices. 
%So, one can prove Theorem \ref{thm_main} using this result.
%However, we instead decided to
%present a nearly self-contained proof which is tailored to symplectic 
%eigenvalues and avoids having recourse to \cite{tilli-2}.
%The proof of Tilly's is much involved and complicated.
%We have given a more direct and simpler proof for symplectic eigenvalues.
\end{rem}

\par The entropy rate of a stationary quantum Gaussian process with the associated G-chain $T(\wt{A})$ is given by the formula
\[\mathfrak{S}(T(\wt{A}))=\lim_{n \to \infty} \frac{S(T_n(\wt{A}))}{n},\]
where $S(T_n(\wt{A}))$ denotes the entropy of the quantum Gaussian state with the corresponding G-matrix $T_n(\wt{A}).$
As a consequence of Theorem \ref{thm_main}, we obtain a closed expression for the
entropy rate of a partially symmetric, bounded stationary quantum Gaussian process in terms of the entropies
of the Gaussian states with G-matrices $\wt{A}(\theta).$

\begin{cor}\label{cor4.2}
Let $T(\wt{A})$ be a partially symmetric, bounded stationary G-chain generated by $\wt{A}.$
The entropy rate $\mathfrak{S}\left(T(\wt{A})\right)$ of the corresponding stationary Gaussian process is
\begin{equation}\label{eq:h}
\mathfrak{S}\left(T(\wt{A})\right) = \frac{1}{2\pi} \int_{-\pi}^{\pi}
S\left(\wt{A}(\theta)\right) \dd\theta.
\end{equation}
\end{cor}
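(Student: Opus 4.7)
The strategy is to recognise the corollary as a direct application of Theorem \ref{thm_main} to the scalar ``per-mode entropy'' function. Define
$$g(d) := \frac{2d+1}{2}\, H\!\left(\frac{2d-1}{2d+1}\right).$$
The formula \eqref{eqn1} for the von Neumann entropy then reads $S(A) = \sum_{j=1}^{k} g(d_j(A))$ for every $k$-mode G-matrix $A$. Applying this to $T_n(\wt{A})$ (which is an $nk$-mode G-matrix, being a principal sub-matrix of the G-chain $T(\wt{A})$) and to the $k$-mode G-matrices $\wt{A}(\theta)\in\mathcal{R}(\wt{A})$, the claim \eqref{eq:h} becomes
$$\lim_{n\to\infty}\frac{1}{n}\sum_{j=1}^{nk} g(d_j^{(n)}) \;=\; \frac{1}{2\pi}\int_{-\pi}^{\pi}\sum_{j=1}^{k} g\bigl(d_j(\wt{A}(\theta))\bigr)\,\dd\theta,$$
which is exactly the conclusion \eqref{eqn_main} of Theorem \ref{thm_main} for the continuous function $f = g$.

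To invoke Theorem \ref{thm_main} I need $g$ (suitably extended) to be continuous on $[0, \|\wt{A}\|]$. Since the Shannon entropy $H$ is continuous on $[0,1]$ with $H(0)=0$, the map $g$ is continuous on $[1/2,\infty)$ and satisfies $g(1/2) = 0$; declaring $g\equiv 0$ on $[0,1/2)$ therefore yields a continuous extension. I also need to verify that no argument of $g$ on either side of the identity actually falls in the artificial region $[0,1/2)$. This is where the G-chain hypothesis enters: Theorem \ref{thm4} tells us that $\wt{A}(\theta)$ is a G-matrix for almost every $\theta$, so $d_j(\wt{A}(\theta))\ge 1/2$ a.e., and $T_n(\wt{A})$ is itself a G-matrix, so $d_j^{(n)}\ge 1/2$ for every $n,j$ (one may alternatively read this off from Theorem \ref{the2} via $\mathfrak{m}_{\wt{A}}\ge 1/2$). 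Consequently the extension does not change either the finite sum or the integral.

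With these two points in place, the corollary follows by feeding the extended $g$ into Theorem \ref{thm_main} and reading off the limit as $\frac{1}{2\pi}\int_{-\pi}^{\pi}S(\wt{A}(\theta))\,\dd\theta$. The proof is essentially structural rather than technical: the substantive work has been done in the Szeg\H{o} analogue, and the only new ingredient is the observation that the von Neumann entropy is a continuous scalar functional of the symplectic eigenvalues, permitting the Szeg\H{o}-type convergence to be transferred from symplectic eigenvalues to entropies. There is no serious obstacle here; the only minor care required is the endpoint bookkeeping at $d=1/2$ described above.
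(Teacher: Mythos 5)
Your proposal is correct and follows essentially the same route as the paper: the paper also writes $S(B)=\sum_{d\in\sigma_s(B)}f(d)$ for the per-mode entropy function (given there in the closed form $f(x)=(x+\tfrac12)\log(x+\tfrac12)-(x-\tfrac12)\log(x-\tfrac12)$ for $x>\tfrac12$, extended by $0$ on $[0,\tfrac12]$, which coincides with your $g$) and then applies Theorem \ref{thm_main} to this continuous $f$. Your extra remark that the G-chain hypothesis (via Theorem \ref{thm4}) keeps all symplectic eigenvalues at or above $\tfrac12$, so the zero extension is never actually invoked, is a harmless and slightly more careful piece of bookkeeping than the paper records.
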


\begin{proof}
Define the function $f:[0,\|\wt{A}\|]\to\mathbb{R}$ as
$$f(x)=\begin{cases}
\left(x+\frac{1}{2}\right)\log\left(x+\frac{1}{2}\right)-\left(x-\frac{1}{2}\right)\log\left(x-\frac{1}{2}\right) & \quad \text{ if }\frac{1}{2}<x\le\|\wt{A}\|,\\
0 & \quad \text{ if } 0\le x\le \frac{1}{2}.
\end{cases}$$
Using \eqref{eqn1}we can see that the entropy of any G-matrix $B$ can be written as
\begin{equation}
S(B)=\sum\limits_{d\in\sigma_s(B)}f(d).\label{eqer1}
\end{equation}
Hence the entropy rate $\mathfrak{S}\left(T(\wt{A})\right)$ of $T(\wt{A})$ is given by
\begin{equation}
\mathfrak{S}\left(T(\wt{A})\right)=\lim\limits_{n\to\infty}\frac{1}{n}\sum\limits_{j=1}^{nk}f\left(d_j(T_n(\wt{A}))\right).\label{eqer2}
\end{equation}
Since $f$ is continuous and $\wt{A}\in L^\infty_{2k\times 2k},$ we can apply Theorem \ref{thm_main} to get
$$\mathfrak{S}\left(T(\wt{A})\right)=\frac{1}{2\pi}\int\limits_{-\pi}^{\pi}\frac{1}{k}\sum\limits_{j=1}^{k}f\left(d_j(\wt{A}(\theta))\right)\dd\theta.$$
Using the formula \eqref{eqer1} for the sum inside this integral, we obtain \eqref{eq:h}.
%We finally obtain \eqref{eq:h} by using \eqref{eqer1} for $\wt{A}(\theta)$ in $\mathcal{R}(\wt{A}).$
\end{proof}

\par The entropy rate of a special kind of stationary quantum Gaussian process has been computed in the paper \cite{krprb2}. There the authors considered a block Toeplitz matrix $T_n(\wt{A})$ given by 
\[T_n(\wt{A}) = \begin{cases}
A & n=0,\\
p_{|n|} B & \text{otherwise};
\end{cases}\]
where $A$ and  $B$ are $2k \times 2k$ real symmetric matrices such that $A + tB$ is a G-matrix for each $t \in [-2,2]$, and $\{p_1, p_2, \ldots\} $ is a probability distribution over $\{1,2, \ldots\}$. In this case $\wt{A}(\theta)$ takes the form $\left( A + \sum_{j \in \mathbb{Z}\backslash\{0\}} p_{|j|} B e^{\imath j \theta}\right),\, \theta \in [-\pi, \pi].$  Our Corollary \ref{cor4.2} gives a much more general result. 

%%%%%%%%%%%%%%%%%%%%%%%%%%%%%%%%%%%%%%%%%%%%%%%%%%%%%%%%%%

\par In the rest of the paper, we use Theorem \ref{thm_main} to study the distribution of symplectic eigenvalues of truncated block Toeplitz matrices. 
Henceforth $T(\wt{A})$ is a partially symmetric, bounded, positive invertible operator on $l^2_{2k}$ generated by $\wt{A}.$ Recall the definition of $\mathfrak{m}_{\wt{A}}$ given in \eqref{eqnt1}.

\begin{thm}\label{the22}
For each $m\in\mathbb{N},$ $\lim_{n \to \infty} d_m^{(n)} = \mathfrak{m}_{\wt{A}}.$ 
Consequently 
\begin{equation}\label{s4}
\overline{\bigcup_{n \in \mathbb{N}} W_s(T_n(\wt{A})) } = \overline{W_s(\wt{A})}.
\end{equation}
\end{thm}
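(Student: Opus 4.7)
The plan is to establish the limit $d_m^{(n)}\to\mathfrak{m}_{\wt{A}}$ by sandwiching it: Theorem \ref{the2} already gives the lower bound $d_m^{(n)}\ge\mathfrak{m}_{\wt{A}}$, so the task is to produce a matching upper bound. I will argue by contradiction. If the conclusion fails for some fixed $m$, then for some $\delta>0$ and some subsequence $(n_\ell)$ one has $d_m^{(n_\ell)}\ge\mathfrak{m}_{\wt{A}}+\delta$. Since the symplectic eigenvalues are arranged in increasing order, this forces $d_j^{(n_\ell)}\ge\mathfrak{m}_{\wt{A}}+\delta$ for every $j\ge m$, so only the first $m-1$ of them can sit near $\mathfrak{m}_{\wt{A}}$.

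Next I pick a nonnegative continuous test function $f$ on $[0,\|\wt{A}\|]$ with $f(\mathfrak{m}_{\wt{A}})=1$ and $\operatorname{supp}(f)\subseteq[0,\mathfrak{m}_{\wt{A}}+\delta/2]$, and feed it into Theorem \ref{thm_main}. On the one hand,
\[
\frac{1}{n_\ell}\sum_{j=1}^{n_\ell k}f(d_j^{(n_\ell)})
=\frac{1}{n_\ell}\sum_{j=1}^{m-1}f(d_j^{(n_\ell)})
\;\le\;\frac{(m-1)\|f\|_\infty}{n_\ell}\longrightarrow 0.
\]
On the other hand, Theorem \ref{thm_main} says this sequence converges to $\frac{1}{2\pi}\int_{-\pi}^{\pi}\sum_{j=1}^{k}f(d_j(\wt{A}(\theta)))\,d\theta$, and I will show the integral is strictly positive. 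By continuity, $f\ge 1/2$ on some interval $[\mathfrak{m}_{\wt{A}},\mathfrak{m}_{\wt{A}}+\eta]$; and by the very definition of $\mathfrak{m}_{\wt{A}}$ in \eqref{eqnt1} as an essential infimum, the set $E_\eta=\{\theta:d_1(\wt{A}(\theta))<\mathfrak{m}_{\wt{A}}+\eta\}$ has positive Lebesgue measure. The integrand is therefore at least $1/2$ on $E_\eta$, forcing the integral to be bounded below by $m(E_\eta)/(4\pi)>0$, which contradicts the vanishing displayed above.

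The numerical-range statement \eqref{s4} then follows with little extra work. By Proposition \ref{prop4}(iii) each $W_s(T_n(\wt{A}))$ equals $[d_1^{(n)},\infty)$, and the inclusion $W_s(T_n(\wt{A}))\subseteq W_s(T_{n+1}(\wt{A}))$, obtained by padding test vectors with zeros, shows that $d_1^{(n)}$ is nonincreasing and, by the first part just proved, converges to $\mathfrak{m}_{\wt{A}}$. Hence $\overline{\bigcup_n W_s(T_n(\wt{A}))}=[\mathfrak{m}_{\wt{A}},\infty)$. A scaling argument identical to the one used in the proof of Proposition \ref{prop4}(ii), applied to $(t\wt{u},\wt{v}/t)$ in $L^2_{2k}\times L^2_{2k}$, yields $\overline{W_s(\wt{A})}=[\inf W_s(\wt{A}),\infty)$, which equals $[\mathfrak{m}_{\wt{A}},\infty)$ by \eqref{eqt1}. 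Both sides of \eqref{s4} therefore agree.

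The main obstacle will be the first step: choosing the bump function $f$ and converting ``essential infimum'' into a set $E_\eta$ of positive measure that lifts to a strictly positive value of the right-hand side of \eqref{eqn_main}. Once that quantitative piece is in hand, the contradiction between the two evaluations of \eqref{eqn_main} is essentially immediate, and the passage to symplectic numerical ranges is cosmetic.
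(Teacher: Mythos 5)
Your proof is correct, and its core argument --- a nonnegative continuous bump supported near $\mathfrak{m}_{\wt{A}}$ fed into Theorem \ref{thm_main}, combined with $W_s(T_n(\wt{A}))=[d_1^{(n)},\infty)$ and convexity of $W_s(\wt{A})$ --- coincides with the paper's. The one real difference is how the contradiction in the first part is set up. The paper first appeals to the interlacing inequality (relation (42) of \cite{bhatia-jain}) for symplectic eigenvalues of principal submatrices, giving $d_m^{(n+1)}\le d_m^{(n)}$ for $n\ge m$; this makes $r=\lim_n d_m^{(n)}$ exist, and the bump function is then tailored to be positive exactly on $[\mathfrak{m}_{\wt{A}},r)$. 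You avoid interlacing entirely: Theorem \ref{the2} supplies the lower bound $d_m^{(n)}\ge\mathfrak{m}_{\wt{A}}$, and mere failure of convergence already produces a subsequence $(n_\ell)$ with $d_m^{(n_\ell)}\ge\mathfrak{m}_{\wt{A}}+\delta$, which suffices because the limit in \eqref{eqn_main} exists along the full sequence and hence along every subsequence. This is a slightly lighter route in prerequisites --- it relies only on results already established in this paper rather than importing an interlacing theorem --- at the cost of not exhibiting an explicit limiting value $r$. Your treatment of the right-hand side (the set $E_\eta=\{\theta:d_1(\wt{A}(\theta))<\mathfrak{m}_{\wt{A}}+\eta\}$ has positive measure by the essential-infimum definition, so the integral is at least $m(E_\eta)/(4\pi)$) simply spells out what the paper states more tersely; both versions hinge on the same observation. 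For the final step, you invoke monotonicity of $d_1^{(n)}$ via the inclusion $W_s(T_n(\wt{A}))\subseteq W_s(T_{n+1}(\wt{A}))$, which is fine but unnecessary once $d_1^{(n)}\to\mathfrak{m}_{\wt{A}}$ with $d_1^{(n)}\ge\mathfrak{m}_{\wt{A}}$ is in hand, since $\overline{\bigcup_n[d_1^{(n)},\infty)}=[\inf_n d_1^{(n)},\infty)$ regardless of monotonicity.
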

\begin{proof}
For every $n\in\mathbb{N},$ $T_n(\wt{A})$ is a principal submatrix of $T_{n+1}(\wt{A}).$
Hence from the relation (42) of \cite{bhatia-jain} we see that
\[0\le d_m^{(n+1)} \le d_m^{(n)}\ \ \textrm{ for all }n\ge m.\]
 Hence $\lim\limits_{n\to\infty}d_m^{(n)}$ exists. Suppose this equals
$ r .$ By definition $r \ge \mathfrak{m}_{\wt{A}}$.  Suppose  $ r  >
\mathfrak{m}_{\wt{A}}.$ Define $f$ on $[0, \|\wt{A}\|]$ as 
\[f(x) = \begin{cases} 
0 & x>  r  \text{ or } x< \mathfrak{m}_{\wt{A}} -1,\\
x - \mathfrak{m}_{\wt{A}} +1 & \mathfrak{m}_{\wt{A}} -1 \le x \le
\mathfrak{m}_{\wt{A}}, \\
\frac{x-  r }{\mathfrak{m}_{\wt{A}} -  r }  &
\mathfrak{m}_{\wt{A}} \le x \le  r .
\end{cases} \]
Clearly $f$ is continuous on $[0, \|\wt{A}\|],$ and the formula 
\eqref{eqn_main} holds for $f.$
But $f(d_j^{(n)}) =0$ for all
$j=m, \cdots, nk$ and for all $n \ge m.$ 
So the left hand side of \eqref{eqn_main} is zero.
But since $\mathfrak{m}_{\wt{A}} = \textrm{essinf}\, d_1(\wt{A}(\theta))$
and $f$ is positive for $[\mathfrak{m}_{\wt{A}},  r ],$ we
have $f\left( d_j(\wt{A}(\theta))\right) > 0$ for $\theta$ in a
set of positive measure. Hence the right hand side of \eqref{eqn_main} is strictly positive. This is a contradiction.  

\par By Proposition \ref{prop4} (iii), we know that
$W_s(T_n(\wt{A})) =[d_1^{(n)}, \infty).$ Hence 
\[ \overline{\bigcup_{n \in \mathbb{N}} W_s(T_n(\wt{A}))} = [\lim_{n \to \infty}
d_1^{(n)}, \infty) = [\mathfrak{m}_{\wt{A}}, \infty).\]
Since $\mathfrak{m}_{\wt{A}} = \inf W_s(\wt{A})$ and $W_s(\wt{A})$ is convex,
this proves \eqref{s4}
%\[\overline{\bigcup W_s(T_n(\wt{A})) } =
%\overline{W_s(\wt{A})}.\]
\end{proof}

\begin{lemma}\label{ls5}
Let $K$ be a compact subset of $\mathbb{R},$ and let
$c_n(K)$ be the cardinality of the set $\{j: d_j^{(n)} \in
K\}.$ Then 
\begin{equation}\label{s5}
\lim_{n \to \infty} \frac{c_n(K)}{n} = \frac{1}{2\pi}\sum\limits_{j=1}^k
m\{ \theta \in [-\pi, \pi]: d_j\left(\wt{A}(\theta)\right) \in
K\}.
\end{equation}
\end{lemma}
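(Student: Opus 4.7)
The plan is to deduce \eqref{s5} from Theorem \ref{thm_main} by approximating the discontinuous indicator $\chi_K$ by continuous functions from above and below and sandwiching. First I would note that
\[\frac{c_n(K)}{n} = \frac{1}{n}\sum_{j=1}^{nk}\chi_K(d_j^{(n)}), \qquad \frac{1}{2\pi}\sum_{j=1}^k m\{\theta : d_j(\wt{A}(\theta)) \in K\} = \frac{1}{2\pi}\int_{-\pi}^\pi \sum_{j=1}^k \chi_K(d_j(\wt{A}(\theta)))\,\dd\theta,\]
so that \eqref{s5} is the conclusion of \eqref{eqn_main} for the test function $f = \chi_K$. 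Since every $d_j^{(n)}$ and every $d_j(\wt{A}(\theta))$ lies in the compact interval $I = [0, \|\wt{A}\|]$, I may replace $K$ by $K \cap I$ and assume $K \subseteq I$.

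Next, for each $\delta > 0$ I would introduce the continuous functions on $I$
\[g_\delta(x) = \max\bigl\{0,\, 1 - d(x,K)/\delta\bigr\}, \qquad h_\delta(x) = \min\bigl\{1,\, d(x, K^c)/\delta\bigr\},\]
which satisfy $h_\delta \le \chi_K \le g_\delta$ pointwise on $I$, and as $\delta \downarrow 0$ satisfy $g_\delta \downarrow \chi_K$ (since $K$ is closed) and $h_\delta \uparrow \chi_{K^\circ}$. Applying \eqref{eqn_main} to $g_\delta$ and $h_\delta$, and then letting $\delta \downarrow 0$ with monotone convergence taking the limit through the $\theta$-integral, will yield the sandwich
\[\frac{1}{2\pi}\sum_{j=1}^k m\{\theta : d_j(\wt{A}(\theta)) \in K^\circ\} \le \liminf_n \frac{c_n(K)}{n} \le \limsup_n \frac{c_n(K)}{n} \le \frac{1}{2\pi}\sum_{j=1}^k m\{\theta : d_j(\wt{A}(\theta)) \in K\}.\]

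The main obstacle will be closing this sandwich: setting $\mu(E) := \frac{1}{2\pi}\sum_{j=1}^k m\{\theta : d_j(\wt{A}(\theta)) \in E\}$, the gap between the two extreme bounds is exactly $\mu(\partial K)$. To force this to vanish I would invoke the total mass identity $\mu(I) = k = \frac{1}{n}\sum_{j=1}^{nk} 1$: first partition $I$ into finitely many closed subintervals whose endpoints avoid the (at most countably many) atoms of $\mu$, making each piece a $\mu$-continuity set on which the sandwich already gives equality; the matching totals $\sum_r c_n(I_r)/n = k = \sum_r \mu(I_r)$, combined with the one-sided bounds $\limsup_n c_n(I_r)/n \le \mu(I_r)$, will then force $\lim_n c_n(I_r)/n = \mu(I_r)$ for each $r$. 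Finally, taking progressively finer such partitions and approximating $K$ from above and below by unions of continuity subintervals will reduce the general case to the one already handled and conclude the proof.
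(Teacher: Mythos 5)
Your sandwich step is correct, and it is the rigorous core of the matter: writing $\mu(E)=\frac{1}{2\pi}\sum_{j=1}^{k} m\{\theta: d_j(\wt{A}(\theta))\in E\}$, applying Theorem \ref{thm_main} to $h_\delta\le\chi_K\le g_\delta$ gives
\[
\mu(K^{\circ})\ \le\ \liminf_{n\to\infty}\frac{c_n(K)}{n}\ \le\ \limsup_{n\to\infty}\frac{c_n(K)}{n}\ \le\ \mu(K),
\]
and in particular proves \eqref{s5} whenever $\mu(\partial K)=0$. The genuine gap is your final reduction. The partition argument only ever produces closed intervals whose endpoints are non-atoms, i.e.\ $\mu$-continuity sets, and for those the sandwich already gives the limit, so the total-mass identity adds nothing there. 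When you pass to a general compact $K$, approximation from outside by unions of such intervals only reproduces the upper bound $\mu(K)$ you already have, while approximation from inside breaks down exactly when $K$ has empty interior but positive $\mu$-measure (a single atom of $\mu$, a Cantor-type set): no nondegenerate continuity interval lies inside $K$, so the lower bound never rises above $\mu(K^{\circ})$. Matching totals cannot tell you how the mass $\mu(\partial K)$ splits, in the limit, between eigenvalues landing exactly on $\partial K$ and eigenvalues landing just outside $K$.

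Moreover this gap cannot be closed, because the statement is false as it stands when $\mu(\partial K)>0$. Take $k=1$ and $\wt{A}(\theta)=a(\theta)I_2$ with $a(\theta)=1+\max(0,\cos\theta)$. Then $T(\wt{A})$ is partially symmetric, bounded and positive invertible, $d_1(\wt{A}(\theta))=a(\theta)$, and the symplectic eigenvalues of $T_n(\wt{A})$ are exactly the eigenvalues of the scalar Toeplitz matrix $T_n(a)$; these are all strictly greater than $1$, since $\langle x,(T_n(a)-I)x\rangle=\frac{1}{2\pi}\int_{-\pi}^{\pi}(a(\theta)-1)|p(\theta)|^2\dd\theta>0$ for every nonzero trigonometric polynomial $p$. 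With $K=\{1\}$ the left-hand side of \eqref{s5} is $0$, while the right-hand side is $\frac{1}{2\pi}m\{\theta: a(\theta)=1\}=\frac{1}{2}$. So the lemma requires the extra hypothesis that $K$ be a $\mu$-continuity set ($\mu(\partial K)=0$), and under that hypothesis your sandwich is already a complete proof. For comparison, the paper's own argument approximates $\chi_K$ only from above, by $f_\epsilon(x)=\e^{-g(x)/\epsilon}$, and then interchanges $\lim_{\epsilon\to 0}$ with $\lim_{n\to\infty}$; that interchange is precisely where the boundary term $\mu(\partial K)$ is lost, and it fails in the example above. Up to your last step, then, your write-up is actually more transparent than the published proof; the last step is the part that cannot be carried out.
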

\begin{proof}
Let $g$ be the distance function defined as $$g(x) = |x -K| := \inf \{|x -t|: t \in K\}.$$
For any $\epsilon>0$ define the function $f_\epsilon : [0 , \|\wt{A}\|] \to  \mathbb{R}$ as
$f_\epsilon(x) = \e^{-\frac{g(x)}{\epsilon}}.$
Clearly $f_\epsilon$ is continuous and $f_\epsilon(x)=1$ if
and only if $x \in K.$ We can see that as $\epsilon \to 0,$
$f_\epsilon$ converges to the characteristic function
$\chi_K$ in the $L^1$ norm. Hence 
\begin{align*}
\lim_{\epsilon \to 0} \frac{1}{2\pi} \int_{-\pi}^\pi
\sum_{j=1}^k  f_\epsilon (d_j(\wt{A}(\theta))) \dd \theta  &=
\frac{1}{2\pi} \int_{-\pi}^\pi \sum_j \chi_K
(d_j(\wt{A}(\theta))) \dd \theta \\
&= \frac{1}{2\pi} \sum_j m(\{\theta:d_j(\wt{A}(\theta)) \in
K\}).
\end{align*}
Also 
\begin{align*}
\lim_{\epsilon \to 0} \lim_{n \to \infty} \frac{1}{n}
\sum_{j=1}^{nk} f_\epsilon(d_j^{(n)})  =  \lim_{n \to
\infty} \sum_j \chi_K(d_j^{(n)}) = \lim_{n \to \infty} \frac{c_n(K)}{n}.  
\end{align*}
Applying Theorem \ref{thm_main} with $f=f_\epsilon$ and taking $\epsilon
\to 0$ we get \eqref{s5}. 
\end{proof}

\par We know that the map $d_j$ that takes a positive definite
matrix $B$ to its $j$th minimum symplectic eigenvalue
$d_j(B)$ is continuous \cite{bhatia-jain}. Since
$\theta \mapsto \wt{A}(\theta)$ is measurable on $[-\pi, \pi],$
the composite map $\theta \mapsto d_j(\theta) =
d_j(\wt{A}(\theta))$ is also measurable. 

\par Let $\mathcal{R}_j$ denote the essential range of the
map $d_j(\theta)$ and let $\mathcal{R}= \bigcup_{j=1}^{k}
\mathcal{R}_j.$ Since $\wt{A} \in L^\infty_{2k \times 2k},$
 the set $\mathcal{R}$ is compact.
%Define another set
%\[\mathcal{RD}(\wt{A}) = \{d_j(B): B \in \mathcal{R}(\wt{A}),, j =1,
%\cdots, k\}.\] 

\begin{lemma}\label{lemt1}
 For every $\wt{A}(\theta)$ in $\mathcal{R}(\wt{A})$ and $1\le j\le k,$
$d_j(A(\theta))$is in $\mathcal{R}_j.$
%$d_j(B)\in\mathcal{R}_j.$
%Consequently, $\mathcal{RD}(\wt{A})\subseteq\mathcal{R}.$
\end{lemma}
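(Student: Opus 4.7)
The plan is to reduce the statement to the continuity of the map $B \mapsto d_j(B)$ on the space of positive definite matrices, a fact that the paper has just recalled immediately before the lemma (citing \cite{bhatia-jain}). The essential range $\mathcal{R}_j$ is defined as the essential range of the real-valued function $\theta \mapsto d_j(\wt{A}(\theta))$, while $\mathcal{R}(\wt{A})$ is the essential range of the matrix-valued function $\theta \mapsto \wt{A}(\theta)$. So the lemma is essentially saying that pushing forward through a continuous map respects essential ranges.

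Here is how I would carry out the argument. Fix $\wt{A}(\theta_0) \in \mathcal{R}(\wt{A})$; set $B := \wt{A}(\theta_0)$. Since $T(\wt{A})$ is positive invertible, Proposition \ref{prop3} guarantees that every matrix in $\mathcal{R}(\wt{A})$ is positive definite, so $d_j(B)$ is well defined. Fix $\epsilon > 0$. By the continuity of $d_j$ on the open set of positive definite matrices, there exists $\delta > 0$ such that for every positive definite $C$ with $\|C - B\| < \delta$ one has $|d_j(C) - d_j(B)| < \epsilon$. (Shrink $\delta$ if necessary to ensure such $C$ remain positive definite, using that $B$ lies in the open cone of positive definite matrices.)

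Now apply the defining property of $B \in \mathcal{R}(\wt{A})$: the set
\[
E_\delta := \{t \in [-\pi,\pi] : \|\wt{A}(t) - B\| < \delta\}
\]
has positive Lebesgue measure. For each $t \in E_\delta$, the continuity estimate gives $|d_j(\wt{A}(t)) - d_j(B)| < \epsilon$, hence
\[
E_\delta \subseteq \{t \in [-\pi,\pi] : |d_j(\wt{A}(t)) - d_j(B)| < \epsilon\},
\]
and the right-hand set has positive measure. Since $\epsilon$ was arbitrary, this is exactly the statement that $d_j(B)$ lies in the essential range of the scalar function $d_j(\wt{A}(\cdot))$, i.e.\ $d_j(B) \in \mathcal{R}_j$.

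There is no real obstacle in this argument; the only subtlety is ensuring that $d_j$ is being applied to positive definite matrices when invoking continuity, which follows from Proposition \ref{prop3} together with the hypothesis that $T(\wt{A})$ is positive invertible. The proof is short because the essential range is stable under composition with continuous maps, and this lemma is precisely the concrete instance of that principle needed for the distributional results to follow.
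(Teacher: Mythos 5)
Your proof is correct and follows essentially the same route as the paper: fix $B\in\mathcal{R}(\wt{A})$, use the continuity of $d_j$ on positive definite matrices to get $\delta$ from $\epsilon$, and note that the positive-measure set $\{t:\|\wt{A}(t)-B\|<\delta\}$ is contained in $\{t:|d_j(\wt{A}(t))-d_j(B)|<\epsilon\}$, so the latter has positive measure. Your additional remark justifying that $d_j$ is applied only to positive definite matrices (via Proposition \ref{prop3} and positive invertibility of $T(\wt{A})$) is a small but welcome point of care that the paper leaves implicit.
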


\begin{proof}
 Let $B=\wt{A}(\theta)$ be any element of $\mathcal{R}(\wt{A}).$
We show that $d_j(B)\in\mathcal{R}_j.$
Let $\epsilon>0.$
Since the map $d_j$ is continuous on positive definite matrices, we can find a $\delta>0$ such that
\begin{equation}
\|\wt{A}(t)-B\|<\delta\implies |d_j(\wt{A}(t))-d_j(B)|<\epsilon.\label{eqt2}
\end{equation}
By the definition of the essential range of $\wt{A},$
the set $S=\{t:\|\wt{A}(t)-B\|<\delta\}$
has positive measure.
Let $T$ be the set
$$T=\{t:|d_j(\wt{A}(t))-d_j(B)|<\epsilon\}.$$
By \eqref{eqt2}
we see that $S\subseteq T.$
Hence $T$ also has positive measure.
This shows that $d_j(B)\in\mathcal{R}_j.$
\end{proof}

\par For any subset $X$ of $\mathbb{R}$ let $\mathcal{B}(X, \delta)$ be its $\delta$-neighbourhood:
\[\mathcal{B}(X, \delta) = \{ x \in \mathbb{R}: |x -s | <
\delta \text{ for some } s \in X\}.\]
\par Let $\mathcal{D}_n$ be the set of 
symplectic eigenvalues of $T_n(\wt{A}).$ Let $\mathcal{D}=
\bigcup_n \mathcal{D}_n$
\begin{thm}\label{thm4.4}
The set $\mathcal{D}$ is dense in $\mathcal{R}.$ Further for each
$\delta>0$ let $X_\delta$ be the set 
\begin{equation}
X_\delta =[\mathfrak{m}_{\wt{A}}, \|\wt{A}\|] \backslash
\mathcal{B}(\mathcal{R},\delta).
\end{equation}
Then 
\begin{equation}
\lim\limits_{n \to \infty} \frac{c_n(X_\delta)}{n} =0.\label{eqt3}
\end{equation}
\end{thm}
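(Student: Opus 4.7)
The plan is to reduce both parts of the theorem to Lemma \ref{ls5}, which converts the asymptotic density of symplectic eigenvalues of $T_n(\wt{A})$ in a compact set $K$ into the total Lebesgue measure of the $\theta$-sets on which the scalar maps $\theta \mapsto d_j(\wt{A}(\theta))$ land in $K$. The appropriate test sets for the two assertions are shrinking intervals around points of $\mathcal{R}$ for the density claim, and the compact set $X_\delta$ itself for \eqref{eqt3}.

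For the density of $\mathcal{D}$ in $\mathcal{R}$, I would fix an arbitrary $r \in \mathcal{R}$, pick an index $j$ with $r \in \mathcal{R}_j$, and apply Lemma \ref{ls5} to the compact intervals $K_\epsilon = [r-\epsilon, r+\epsilon]$. Because $r$ lies in the essential range of the measurable scalar function $\theta \mapsto d_j(\wt{A}(\theta))$, the set $\{\theta : d_j(\wt{A}(\theta)) \in K_\epsilon\}$ has strictly positive measure, so the right-hand side of \eqref{s5} is strictly positive. Hence $c_n(K_\epsilon) \ge 1$ for all sufficiently large $n$, which produces a symplectic eigenvalue of some $T_n(\wt{A})$ within $\epsilon$ of $r$. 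Letting $\epsilon \to 0$ gives $r \in \overline{\mathcal{D}}$.

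For the zero-density assertion \eqref{eqt3}, I would first record that every $d_j^{(n)}$ lies in the compact interval $[\mathfrak{m}_{\wt{A}}, \|\wt{A}\|]$, using Theorem \ref{the2} for the lower bound and the norm estimate preceding Theorem \ref{thm_main} for the upper bound; consequently $X_\delta$ is a closed, hence compact, subset of this interval and Lemma \ref{ls5} applies. The key observation is that, by the defining property of essential range, $d_j(\wt{A}(\theta)) \in \mathcal{R}_j$ for almost every $\theta$ and every $j$; since $\mathcal{R}_j \subseteq \mathcal{R} \subseteq \mathcal{B}(\mathcal{R}, \delta)$ is disjoint from $X_\delta$, each of the $k$ sets $\{\theta : d_j(\wt{A}(\theta)) \in X_\delta\}$ is null. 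The right-hand side of \eqref{s5} therefore vanishes and \eqref{eqt3} follows.

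The only step requiring a moment's thought is the almost-everywhere containment $d_j(\wt{A}(\theta)) \in \mathcal{R}_j$, which is the standard measure-theoretic fact that a measurable real-valued function takes values in its essential range off a null set; this is implicit in the discussion of essential ranges in Section \ref{S2}, but deserves to be stated explicitly. Beyond this, both parts are essentially direct corollaries of Lemma \ref{ls5}, since the Szeg\H{o}-type analytical content has already been encapsulated in Theorem \ref{thm_main}, and no further limiting or approximation argument is needed.
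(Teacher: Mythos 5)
Your proposal is correct and follows essentially the paper's own route: both assertions are reduced to Lemma \ref{ls5}, with the zero-density part handled exactly as in the paper via the almost-everywhere containment $d_j(\wt{A}(\theta))\in\mathcal{R}_j$ (a fact the paper uses implicitly, as the scalar analogue of the essential-range remark in Section \ref{S2}, and which you rightly flag as worth stating). The only difference is organizational: for density you apply Lemma \ref{ls5} directly to the small compact intervals $[r-\epsilon,r+\epsilon]$ to force $c_n\ge 1$ eventually, whereas the paper argues by contradiction, comparing the counts for $\mathcal{R}$ and for $\mathcal{R}\setminus\mathcal{B}(x,\epsilon)$; your direct version is equally valid and slightly cleaner.
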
 
\begin{proof}
Since $\mathcal{R}$ is compact, we can apply Lemma \ref{ls5}
to get 
\begin{equation}\label{eq4.10}
\lim\limits_{n \to \infty} \frac{c_n(\mathcal{R})}{n} =
\frac{1}{2\pi} \sum\limits_{j=1}^k m (\{ \theta: d_j(\wt{A}(\theta)) \in
\mathcal{R}\}).
\end{equation}
\par Suppose $\mathcal{D}$ is not a dense subset of $\mathcal{R}$. Then there exist $j \in \{1, 2, \ldots,k\}$ $x \in \mathcal{R}_j$ and $\epsilon >0$ such that
$\mathcal{B}(x, \epsilon) \cap \mathcal{D} =
\emptyset.$ Since  $x \in \mathcal{R}_j$ the
set 
\[S =\{ t: |x- d_j(\wt{A}(t)) |< \epsilon\}\]
has a positive measure. Let $Y = \mathcal{R} \backslash
\mathcal{B}(x, \epsilon).$ The set $Y$ is compact, hence by \eqref{s5} we have
\begin{equation}
\lim\limits_{n\to\infty}\frac{c_n(Y)}{n}=\frac{1}{2\pi}\sum\limits_{j=1}^{k}m\{t:d_j(\wt{A}(t))\in Y\}.\label{eq4.11}
\end{equation}
Since $S$ has positive measure,
$$m\{t:d_j(\wt{A}(t))\in\mathcal{R}\}>m\{t:d_j(\wt{A}(t))\in Y\}.$$
This shows that the right hand side of \eqref{eq4.10} is strictly greater than the right hand side of \eqref{eq4.11}.
But since $\mathcal{D}\cap \mathcal{B}(x,\epsilon)=\emptyset,$
 $c_n(Y)=
c_n(\mathcal{R})$ for all $n.$
So,  the left hand sides of
 \eqref{eq4.10} and \eqref{eq4.11} are equal.
This is a contradiction. 
Hence $\mathcal{D}$ must be dense in $\mathcal{R}.$

\par The set $X_\delta$ is compact, and hence \eqref{s5} holds when $K=X_\delta.$
Since $X_\delta\cap\mathcal{R}_j=\emptyset$ for every $j,$
$$m\{\theta:d_j(\wt{A}(\theta))\in X_\delta\}=0.$$
This proves \eqref{eqt3}.
\end{proof}
%%%%%%%%%%%%%%%%%%%%%%%%%%%%%%%%%%%%%%%%%%%
\bibliographystyle{acm}
\bibliography{biblio}

%%%%%%%%%%%%%%%%%%%%%%%%%%%%%%%%%%%%%%%%%%%%%%
\end{document}